\documentclass[11pt,reqno]{amsart}
\usepackage{esint}
\usepackage{bbm}
\usepackage{amssymb}
\usepackage{mathrsfs}
\usepackage{amsmath,amsthm}
\usepackage{url}
\usepackage{enumerate}
\usepackage{graphicx,xcolor}
\usepackage{times} 
\usepackage{setspace} 
\usepackage{titlesec} 
\usepackage[left=2.0cm, right=2.0cm, top=2.0cm, bottom=2.0cm]{geometry}
\usepackage{cite}
\usepackage[colorlinks=true,
citecolor=blue,
filecolor=red,
linkcolor=blue,
anchorcolor=red,
urlcolor=blue,
linktocpage=true,
pdfpagelabels=true,
bookmarksnumbered=true]{hyperref}

\numberwithin{equation}{section}

\newtheorem{thm}{Theorem}[section]
\newtheorem{cor}[thm]{Corollary}
\newtheorem{prop}[thm]{Proposition}

\newtheorem{lem}[thm]{Lemma}

\newtheorem{rem}{Remark}
\newtheorem{problem}{Problem}
\newtheorem{ex}[thm]{Example}

\newcommand{\be}{\begin{equation}}
	\newcommand{\ee}{\end{equation}}

\titleformat{\section}
{\fontsize{12}{14}\rmfamily\bfseries\large}
{\thesection}{1em}{}

\titleformat{\subsection}
{\fontsize{12}{14}\rmfamily\bfseries}
{\thesubsection}{1em}{}
\title{Pontrjagin Classes of Locally Spherically Symmetric Projectively Affine Sprays}
\author{Zuhuan Jia and Benling Li}
\date{
	{\small \today}
}
\thanks{
	Benling Li  is partly supported by the NNSFC(12471045),  NMNSF(2024J017) and K.C.
	Wong Magna Fund in Ningbo University. }

\begin{document}
	
		\maketitle
\begin{center}
\vskip -3mm
\small
School of Mathematics and Statistics, Ningbo University, 315211 Ningbo, China\\
{\it jzh\_mathematics@163.com (Z. Jia); libenling@nbu.edu.cn (B. Li) }
\end{center}
	\begin{abstract}
		By Chern--Weil theory, rational Pontrjagin classes are represented by curvature forms. Sprays, which require no metric and include all Finsler and Riemannian geodesic sprays, provide a natural setting for the vanishing problem. Shen and Zhao proved that if a manifold admits a locally projectively flat spray, then all of its rational Pontrjagin classes vanish. We ask whether this rigidity persists for the wider class of projectively affine sprays.
		
		We prove that if a smooth manifold admits a locally spherically symmetric
		projectively affine spray, then all rational Pontrjagin classes vanish.
		Stronger, the Pontrjagin forms of the associated Berwald spray vanish
		identically as differential forms, a genuine geometric rigidity. Hence a
		manifold with nonzero rational Pontrjagin class cannot carry such a spray; in particular, it admits no Riemannian metric whose geodesic spray is locally spherically symmetric.
		We also construct non-Berwald, non-projectively-flat examples with
		identically vanishing Pontrjagin forms, showing the phenomenon is not a
		byproduct of projective flatness.
	\end{abstract}

\noindent\textbf{Keywords}: spray geometry; Pontrjagin classes; spherically symmetric spray; projectively affine

\noindent{\bf 2020 MR Subject Classification}: 53B40, 53C60.

	\section{\textbf{Introduction}}
	
	Pontrjagin classes are integral characteristic classes of real vector
	bundles and, by the Chern--Weil theory \cite{ref3}, admit
	explicit representatives as closed differential forms constructed from
	the curvature of any connection. A basic problem in differential
	geometry is to understand when these classes, or their Chern--Weil
	representatives, are forced to vanish by the existence of a special
	geometric structure. Pontrjagin classes and forms have also been studied in Riemannian and
pseudo-Riemannian geometry \cite{refDerdzinskiRoter, refLafontRoy, ref8,
refFerreiroMunoz,refdeKok}, and
we refer to \cite{ref9,ref10,ref12,ref7,ref13} for background on
characteristic classes.
	
	Spray geometry provides a natural framework in which geodesic equations
	are studied without reference to a metric. A spray on a smooth manifold
	$M$ is a vector field on the slit tangent bundle $TM\backslash\{0\}$,
	locally written as
	\[
	G = y^i\frac{\partial}{\partial x^i} - 2G^i(x,y)\frac{\partial}{\partial y^i},
	\]
	whose integral curves project to geodesics. Every Finsler metric, and in
	particular every Riemannian metric, induces a natural spray, but sprays
	form a strictly larger class of geometric objects.
The curvature tensors of a spray (the Riemann curvature, the Berwald
curvature, the Douglas curvature, and the \(\chi\)-curvature) have been
extensively studied in projective spray geometry and Finsler geometry;
see the survey \cite{LiShen2024Survey} and the references therein. The
Riemann and Berwald curvatures are treated in \cite{ref11, ref5}; the Douglas curvature in \cite{BucMuz, LiMoYu,
Yang,ref6}; and the
\(\chi\)-curvature in \cite{ref2, ref4}.

	A spray is called \emph{projectively affine} if its Douglas curvature
	vanishes. Geometrically, this means that its geodesics coincide, as
	point sets, with those of some affine connection; the Berwald connection
	of a general projectively affine spray is nonlinear, but it lies in the projective class of an affine connection. In 2024, Shen and Zhao
	\cite{ref1} proved the following striking result: if an $n$-dimensional
	manifold admits a \emph{locally projectively flat} spray, then all of
	its rational Pontrjagin classes vanish. Since local projective flatness
	is a projective condition, this theorem provides a genuine
	topological obstruction: a manifold with some nonzero rational
	Pontrjagin class cannot carry a locally projectively flat spray.

This raises the following natural question:
\begin{quote}
	\it Do analogous vanishing results hold for projectively affine
	sprays that are not necessarily locally projectively flat?
\end{quote}
As stated, the answer is negative. Every Riemannian metric induces a
geodesic Berwald spray, and hence projectively affine; however,
there exist Riemannian manifolds with nonzero rational Pontrjagin
classes (such as \(\mathbb{CP}^2\)). Thus, projective affineness alone
is too weak to force the vanishing of Pontrjagin classes, and an
additional symmetry condition is required.

The difficulty is also structural. For a general projectively affine
spray, the Berwald connection is nonlinear, so its curvature forms live
on the slit tangent bundle rather than on \(M\) itself. The Pontrjagin
forms computed from such a connection are not ordinary differential
forms on \(M\), and the algebraic simplifications available in the
locally projectively flat case of \cite{ref1} no longer apply.

We therefore impose the condition of \emph{local spherical symmetry}.
It is a classical condition with natural examples. The geodesic sprays
of Riemannian metrics with constant sectional curvature are globally spherically
symmetric. Another fundamental example is the Funk metric on the unit
ball,
\[
F(x,y)=\frac{\sqrt{|y|^2-(|x|^2|y|^2-\langle x,y\rangle^2)}+\langle x,y\rangle}{1-|x|^2},
\]
which is a spherically symmetric, projectively flat Finsler metric of constant
flag curvature \(K=-\tfrac14\) \cite{Shen2001}.
More generally, a Finsler
metric on an \(O(n)\)-invariant domain is spherically symmetric if and
only if \[ F=|y|\phi(|x|^2,\langle x,y\rangle/|y|),\] 
see \cite{ref19,ref20};
classification results and further examples can be found in
\cite{ref14,HuangMo,Li,Mo,ref16}. The geodesic sprays of all these
	examples are spherically symmetric in the sense of \eqref{GI}. Throughout this paper, \(|\cdot|\) and \(\langle \cdot,\cdot \rangle\)
denote the standard Euclidean norm and inner product on \(\mathbb{R}^n\),
respectively.

 In this paper, we work within the class of \emph{locally
spherically symmetric} sprays. On a Euclidean domain equipped with a fixed
Euclidean coordinate system, Gui and Li \cite{ref17} proved that a spray \(G\)
is invariant under the orthogonal group \(O(n)\) (i.e., \(G^i(Qx, Qy) = Q G^i(x,y)\)
for all \(Q \in O(n)\)) if and only if its coefficients take the form
\begin{equation}
	G^i = |y|A(r,s)y^i + |y|^2B(r,s)x^i, \label{GI}
\end{equation}
where \(A = A(r,s)\), \(B=B(r,s)\) are smooth functions and
\begin{equation}
	r=|x|^2, \quad s=\langle x,y\rangle/|y|. \label{rs}
\end{equation}
	 Motivated by this local normal form, we use spherical
	symmetry \emph{locally} on a general manifold: a spray on a smooth
	manifold $M$ is called locally spherically symmetric if $M$ admits an
	open cover $\{U_\alpha\}$ such that, in a coordinate system on each
	$U_\alpha$, the spray coefficients take the above form. The existence of a spherical chart is thus built into the definition,
rather than derived from \cite{ref17}. Besides local spherical symmetry,
another key notion is homogeneity, i.e., invariance under a transitive Lie
group action, whose framework was developed by Xu \cite{Xu2022}. While
homogeneity is a global condition, local spherical symmetry only requires
\(O(n)\)-invariant spray coefficients in local charts.

Our first main observation is a characterization of the projectively
affine condition. In Lemma \ref{g(r)=0} we prove that a spherically symmetric spray
is projectively affine if and only if
$
B(r,s) = f(r)s^2 + h(r)
$
for some smooth functions \(f(r),h(r)\). This is the key structural restriction
underlying the rest of the paper.

We are now in a position to state our main result.
\begin{thm}\label{The 1}
	Let \(M\) be an \(n\)-dimensional smooth manifold with \(n\ge2\). If
	\(M\) admits a locally spherically symmetric projectively affine
	spray, then all rational Pontrjagin classes of \(M\) vanish.
\end{thm}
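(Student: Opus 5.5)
The plan is to replace the (in general non-quadratic) projectively affine spray by a globally defined torsion-free affine connection $D$ on $M$ in the same projective class, and then to show that in every spherical chart the Chern--Weil forms of $D$ vanish (or are exact), using the normal form of Lemma~\ref{g(r)=0}.

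\emph{Step 1: an affine representative.} In a spherical chart, Lemma~\ref{g(r)=0} gives $B=f(r)s^2+h(r)$. Substituting into \eqref{GI} yields
\[
G^i=|y|A\,y^i+f(r)\langle x,y\rangle^2x^i+h(r)|y|^2x^i=\tfrac12\Gamma^i_{jk}(x)y^jy^k+P\,y^i,
\]
with $P=|y|A$ and $\Gamma^i_{jk}=x^iS_{jk}$, where $S_{jk}=2\bigl(f x_jx_k+h\delta_{jk}\bigr)$. To globalize, I fix a smooth positive density $\sigma$ on $M$ (for instance from a Riemannian metric) and set
\[
\widehat G^i=G^i-\tfrac{1}{n+1}\Bigl(\tfrac{\partial G^m}{\partial y^m}-y^k\partial_k\log\sigma\Bigr)y^i .
\]
The correction in the bracket makes $\widehat G$ transform as a spray under changes of coordinates. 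Since $G$ has vanishing Douglas curvature, $\widehat G^i$ is quadratic in $y$. Hence $\widehat G$ is the geodesic spray of a torsion-free affine connection $D$ on $TM$, projectively related to $G$. Its Chern--Weil forms represent the rational Pontrjagin classes of $M$. In the chart, $D$ has the form $\Gamma^i_{jk}=x^iS_{jk}+\delta^i_j\psi_k+\delta^i_k\psi_j$ for a local $1$-form $\psi$.

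\emph{Step 2: curvature in a chart, with $\psi=0$.} I first handle $\Gamma_0=x^iS_{jk}$. Put $u=(dx^i)$, $\theta_j=S_{jk}dx^k$ and $\omega^i_j=x^i\theta_j$. Then
\[
\Omega=u\wedge\theta^T+x\,\beta^T,\qquad \beta_j=d\theta_j+\eta\wedge\theta_j,\qquad \eta=x^j\theta_j=(fr+h)\,dr .
\]
So $\Omega$ factors through a $2\times2$ matrix of forms, and $\operatorname{tr}\Omega^{m}=\pm\operatorname{tr}C^{m}$, where
\[
C=\begin{pmatrix}\theta_j\wedge dx^j & \eta\\ \beta_j\wedge dx^j & x^j\beta_j\end{pmatrix}.
\]
The four entries simplify as follows.
\begin{itemize}
\item $\theta_j\wedge dx^j=0$, because $S$ is symmetric.
\item $x^j\beta_j=d\eta-dx^j\wedge\theta_j+\eta\wedge\eta=d\eta=0$, because $\eta$ is a function of $r$ times $dr$.
\item The off-diagonal entries are $\eta\propto dr$ and a $3$-form $b=\beta_j\wedge dx^j$.
\end{itemize}
Thus $\operatorname{tr}C^{2k}$ is a multiple of $(\eta\wedge b)^k$, which vanishes for $k\ge2$ since $dr\wedge dr=0$. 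For $k=1$, I will check directly that $dr\wedge\beta_j\wedge dx^j=0$ by expanding $d\theta_j$ and using $x_jdx^j=\tfrac12dr$. Hence all $\operatorname{tr}\Omega^{2k}$ vanish, so all Pontrjagin forms of $\Gamma_0$ vanish identically.

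\emph{Step 3: the projective term, the expected main obstacle.} The local representative $\Gamma_0$ does not glue: the $P$-part and the choice of $\psi$ are chart dependent. What glues is $D$, which differs from $\Gamma_0$ in each chart by a projective change through $\psi$. I would prove that the Pontrjagin forms are unchanged, as forms, under $\Gamma\mapsto\Gamma+\delta\psi+\psi\delta$. My first attempt is the same rank-structure computation: the projective change adds $\delta^i_j\,d\psi$ plus terms of the form $u\wedge\psi^T$, $\psi\,u^T$-type to $\Omega$. The trace part commutes and contributes only through $\operatorname{tr}$ of odd powers, which do not occur in Pontrjagin forms, while the new rank-one pieces enlarge $C$ to a small block matrix whose powers I expect to trace to zero by the same symmetry and $dr$-collinearity arguments.

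If this fails as an identity of forms, I fall back to the class level. The Chern--Weil transgression shows that the forms of $D$ and of the locally defined $\Gamma_0$ differ by $d$ of a Chern--Simons form in $\psi$. I would then show that this Chern--Simons form is built from a globally defined projective tensor (the difference between $D$ and the Berwald connection of $G$, which is intrinsic), so that the local exactness patches to global exactness. Then all rational Pontrjagin classes vanish, proving Theorem~\ref{The 1}.
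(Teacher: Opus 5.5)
\textbf{Verdict.} Your Steps 1--2 are sound, but Step~3 has a genuine gap. In Step~2 the relation $\operatorname{tr}\Omega^m=\pm\operatorname{tr}C^m$ is really a graded trace, but this is moot: $b=\beta_j\wedge dx^j=d(\theta_j\wedge dx^j)+\eta\wedge\theta_j\wedge dx^j=0$, so $C$ is nilpotent and $\operatorname{tr}\Omega_0^m=0$ for every $m\ge1$.

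\textbf{Where Step~3 fails.} The lemma you plan to prove, that Pontrjagin forms are unchanged as forms under $\Gamma\mapsto\Gamma+\delta\psi+\psi\delta$, is false in general, and the reason you give is wrong. A projective change gives exactly $\tilde\Omega=\Omega+d\psi\,I-u\wedge\pi^T$, with $\pi_j=(\nabla_l\psi_j-\psi_l\psi_j)\,dx^l$ and $\pi_j\wedge dx^j=d\psi$. The scalar part also enters even traces: $\operatorname{tr}(\Omega+\alpha I)^2=\operatorname{tr}\Omega^2+2\alpha\wedge\operatorname{tr}\Omega+n\,\alpha\wedge\alpha$. Odd traces also enter the forms $\sigma_{2k}$ through Newton's identities, e.g. $2\sigma_2=(\operatorname{tr}\Omega)^2-\operatorname{tr}\Omega^2$, because $\Omega$ is not skew. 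Concretely, take the flat connection on $\mathbb{R}^4$ and $\psi=x^1dx^2+x^3dx^4$. Then $\operatorname{tr}\tilde\Omega=5\,d\psi$ and $\operatorname{tr}\tilde\Omega^2=5\,d\psi\wedge d\psi$, so $\sigma_2(\tilde\Omega)=10\,d\psi\wedge d\psi\neq0$.

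\textbf{The fallback does not close this.} On a chart $U$ the Pontrjagin forms are exact for trivial reasons, since $TU$ is trivial. The transgression forms built from the chart-dependent $\Gamma_0$ have no reason to agree on overlaps. The Berwald connection of $G$ is nonlinear, so its difference with $D$ is not a tensor on $M$ that could make them patch.

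\textbf{The missing idea.} The projective $1$-form is \emph{closed} in every spherical chart. From $\Pi=(n+1)|y|A+2(h+rf)\langle x,y\rangle$ you get $\psi=\frac{1}{n+1}\bigl(d\ln\sigma-(h+rf)\,dr\bigr)$, so $d\psi=0$.
\begin{itemize}
\item Closedness makes the coefficients of $\pi_j$ symmetric, so $\pi_j\wedge dx^j=0$.
\item The first Bianchi identity for the torsion-free $\Gamma_0$ gives $(\Omega_0)^i_k\wedge dx^k=0$.
\item With $N=-u\wedge\pi^T$ this yields $\Omega_0N=0$ and $N^2=0$, hence $\hat\Omega^m=\Omega_0^m+N\Omega_0^{m-1}$.
\item Therefore $\operatorname{tr}\hat\Omega^m=\operatorname{tr}\Omega_0^m=0$ for all $m\ge1$, odd $m$ included.
\end{itemize}
So every $\sigma_k(\hat\Omega)$ with $k\ge1$ vanishes on each chart, and hence on $M$.

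\textbf{Comparison with the paper.} With this addition your argument is a correct and more direct alternative to the paper's proof. The paper reaches the same form-level vanishing via Shen's formula for $\hat R$, the identity $\Delta_{kl}=0$ of Proposition~\ref{corD}, and the $\mathcal S+\mathcal T$ decomposition of Lemma~\ref{lem2}. In your route, closedness of $\psi$ plays essentially the role that $\Delta_{kl}=0$ plays there: it kills the $\delta^i_j$-part of $\hat\Omega$.
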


The proof of Theorem \ref{The 1} proceeds as follows. We first show in
Proposition \ref{corD} that the projectively affine condition forces a
specific symmetry in the vertical derivatives of the Riemann curvature.
Using this symmetry, we pass to the associated spray of Shen \cite{ref2}
and obtain a Berwald spray whose curvature matrix splits into a symmetric
part and a part carrying a common factor. Applying Newton's identities to
this algebraic decomposition, we prove that all even elementary symmetric
functions of the curvature vanish identically, which yields the theorem.

The proof in fact yields a stronger geometric statement: the Pontrjagin
forms of the associated Berwald connection vanish identically as
differential forms, not merely as cohomology classes (see Remark
\ref{rem:stronger}). This pointwise vanishing is a genuine geometric
rigidity. Furthermore, no closedness assumption is required; the
argument is local and applies to arbitrary smooth manifolds.

	Let us compare Theorem \ref{The 1} with the existing literature. Shen and
Zhao \cite{ref1} proved the vanishing of rational Pontrjagin classes for
locally projectively flat sprays on arbitrary manifolds. The present
paper relaxes local projective flatness to projective affineness, at the
cost of imposing local spherical symmetry. Thus, in the spherically
symmetric setting, it strictly extends the main theorem of \cite{ref1}.

	The scope of the theorem is not restricted to projective flatness or to
	Berwald sprays. In Section \ref{ex} we prove the following existence
	result.
	
	\begin{thm}\label{thm-existence}
		There exist spherically symmetric projectively affine sprays on
		Euclidean domains that are non-Berwald and not of scalar curvature,
		and whose associated sprays have identically vanishing Pontrjagin
		forms. In particular, these sprays are non-projectively flat.
	\end{thm}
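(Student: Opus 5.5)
We will prove Theorem \ref{thm-existence} by an explicit construction inside the family \eqref{GI} with $B=f(r)s^2+h(r)$. By Lemma \ref{g(r)=0}, every such spray is projectively affine. The argument in the proof of Theorem \ref{The 1} shows that the associated Berwald spray of any such spray has identically vanishing Pontrjagin forms (Remark \ref{rem:stronger}). That part of the conclusion is therefore automatic for the whole family. What remains is to choose $A$, $f$, $h$ so that the three negative properties hold: the spray is non-Berwald, it is not of scalar curvature, and it is not projectively flat.

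\textbf{Step 1: non-Berwald.} I would take $A\equiv 0$ and $f\not\equiv 0$, for instance $f(r)=1$ and $h(r)=0$. Then
\[
G^i=|y|^2\Big(\frac{\langle x,y\rangle^2}{|y|^2}\Big)x^i=\langle x,y\rangle^2x^i ,
\]
which is quadratic in $y$, so this choice would be Berwald. Non-Berwaldness must therefore come from the $A$-term or from the $|y|$-factors. I would instead take $A=\epsilon s$ with $\epsilon\neq0$, giving $|y|A\,y^i=\epsilon\langle x,y\rangle y^i$, which is still quadratic. Hence I would use $A=\epsilon\sqrt{r-s^2}$ or simply $A=c\neq 0$ constant. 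With $A=c$, $G^i=c|y|y^i+\cdots$. The term $c|y|y^i$ is projective, so it does not affect the Douglas curvature, and its Berwald curvature is nonzero because $\partial^3_{y}(|y|y^i)\neq0$. Thus $A=c$, $f=1$, $h=0$ is a candidate, and $G^i_{y^jy^ky^l}\neq0$ is verified directly at one point.

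\textbf{Step 2: not of scalar curvature, hence not projectively flat.} Next I would compute the Riemann curvature $R^i{}_k=2\partial_kG^i-y^j\partial_j\partial_{y^k}G^i+2G^j\partial_{y^j}\partial_{y^k}G^i-\partial_{y^j}G^i\partial_{y^k}G^j$ for this explicit spray. Scalar curvature means $R^i{}_k=K(\delta^i_k|y|^2\text{-type}\ldots)$, that is, $R^i{}_k=R\delta^i_k-\tau_k y^i$. To rule this out, it suffices to test at a single convenient point, say $x=(x^1,0,\dots,0)$ and $y=e_2$, or $y$ with components along both $e_1$ and $e_2$. There I would check that $R^i{}_k$ restricted to vectors orthogonal to $y$ is not a multiple of the identity. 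For example, using $n\ge3$ and vectors $e_1$ and $e_3$ (both orthogonal to $y=e_2$ when $x\parallel e_1$), the $x$-direction gets the extra contribution from the $f s^2 x^i$ term while $e_3$ does not. This is an eigenvalue computation at one point, and I expect it to be the main (purely computational) obstacle. If the constant choice is degenerate there, I would perturb with $h(r)=r$.

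Since locally projectively flat sprays have scalar (in fact isotropic-type Weyl-free) curvature, failure of scalar curvature implies non-projective-flatness. This follows from the standard Weyl/Douglas characterization: projectively flat $\Leftrightarrow$ Douglas $=0$ and Weyl $=0$, and Weyl $=0$ $\Leftrightarrow$ scalar curvature. The spray is thus non-Berwald, not of scalar curvature, and non-projectively flat, while its associated spray has vanishing Pontrjagin forms by the argument of Theorem \ref{The 1}.
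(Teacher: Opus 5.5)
Your plan is the paper's proof with a different explicit example. The paper also stays in the family $B=f(r)s^2+h(r)$: Example~\ref{ex:2} takes $A=c_1e^{-c_2r}$, $B=c_2$, and a variant takes $A=e^{-r}$, $B=s^2+1$. It then obtains:
\begin{itemize}
\item projective affineness from Lemma~\ref{g(r)=0};
\item non-Berwaldness from Theorem~\ref{Thm 2}, since $A$ is not of the form $a(r)s$;
\item vanishing Pontrjagin forms from the proof of Theorem~\ref{The 1};
\item non-projective-flatness from the failure of scalar curvature.
\end{itemize}
Your choice $A=c\neq0$, $f=1$, $h=0$ works just as well.

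The one step you leave open is the scalar-curvature test. The paper settles it with the criterion $P\equiv0$ of Lemma~\ref{lem3}. For your spray, \eqref{H1} (equivalently $P=4rfh-2f+4h^2+4h'$) gives $P\equiv-2\neq0$, so no perturbation by $h(r)=r$ is needed.

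Your pointwise check also goes through. Scalar curvature is projectively invariant, so you may drop $c|y|y^i$ and compute for $\langle x,y\rangle^2x^i$. With $u=\langle x,y\rangle$ this gives
$R^i_{\;k}=2u^2\delta^i_k-2x^i(|y|^2x_k-uy_k)-2ux_ky^i$.
At $x=x^1e_1$ with $x^1\neq0$ and $y=e_2$, this acts by $-2(x^1)^2$ on $e_1$ and by $0$ on $e_3$. That is incompatible with $R^i_{\;k}=R\delta^i_k-\gamma_ky^i$.

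Your restriction to $n\ge3$ is necessary, not just convenient. Since $R^i_{\;k}y^k=0$, any $2\times2$ matrix annihilating $y$ has the form $R\delta^i_k-\gamma_ky^i$, so in dimension two every spray is of scalar curvature. Hence the criterion of Lemma~\ref{lem3}, and with it the paper's treatment of Example~\ref{ex:2}, tacitly requires $n\ge3$ as well.

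Finally, drop the alternative $A=\epsilon\sqrt{r-s^2}$ from Step~1. It is not smooth where $x\parallel y$, so it does not give a spray of the form \eqref{GI}.
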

	
	Thus the vanishing phenomenon established in Theorem \ref{The 1} is a
	genuine rigidity of the projectively affine class under spherical
	symmetry, and not a consequence of more restrictive geometric
	hypotheses such as local projective flatness, Berwaldianity, or scalar
	curvature.
	
	Theorem \ref{The 1} has the following immediate consequence, which
	turns the vanishing theorem into a conditional topological obstruction.
	
	\begin{cor}\label{cor:obstruction}
		Let \(M\) be a smooth \(n\)-manifold, \(n\ge2\), and suppose that
	some rational Pontrjagin class of \(M\) is nonzero. Then \(M\) does
	not admit a locally spherically symmetric spray which is
	projectively affine.
	\end{cor}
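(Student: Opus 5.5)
This is the contrapositive of Theorem \ref{The 1}, so the argument is a direct reduction. Suppose, for contradiction, that \(M\) carries a locally spherically symmetric spray \(G\) that is projectively affine. Since \(n\ge 2\), the hypotheses of Theorem \ref{The 1} are met, so every rational Pontrjagin class \(p_k(M)\in H^{4k}(M;\mathbb{Q})\) vanishes. This contradicts the assumption that some rational Pontrjagin class of \(M\) is nonzero, and hence no such spray exists.

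The only point to check is that the hypotheses match the definitions, and there are two such checks.

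\begin{enumerate}[(i)]
\item \emph{Local spherical symmetry.} This notion means that \(M\) has an atlas \(\{U_\alpha\}\) on which the spray coefficients take the form \eqref{GI}. That is exactly the input used in Theorem \ref{The 1}. No compactness, orientability, or globally defined chart is needed, because the theorem is proved chart by chart: on each \(U_\alpha\), Lemma \ref{g(r)=0} gives \(B=f(r)s^2+h(r)\), and one then passes to the associated Berwald spray.
\item \emph{Which classes vanish.} The vanishing concerns the Pontrjagin classes of \(TM\). The argument of Theorem \ref{The 1} computes them by Chern--Weil from the associated Berwald (affine) connection. Its curvature forms are well defined on \(M\), and the resulting Pontrjagin forms vanish pointwise.
\end{enumerate}

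With these two checks in place, the corollary follows immediately. Nothing here is a real obstacle, since all the substance lies in Theorem \ref{The 1}.

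As a remark, the same reasoning gives the Riemannian special case stated in the abstract. If \(g\) is a Riemannian metric whose geodesic spray is locally spherically symmetric, that spray is Berwald and hence projectively affine. Therefore \(M\) cannot have a nonzero rational Pontrjagin class; for example, \(\mathbb{CP}^2\) admits no such metric.
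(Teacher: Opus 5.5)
Your argument is correct and matches the paper's: the corollary is just the contrapositive of Theorem \ref{The 1}, which the paper records as an immediate consequence of it. Your two hypothesis checks (local spherical symmetry used chart by chart, and Chern--Weil vanishing via the associated Berwald connection) agree with the paper's proof and Remark \ref{rem:stronger}, though the corollary does not need them.
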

	
This corollary provides a topological obstruction to the existence of
projectively affine sprays with local spherical symmetry. To see that
the obstruction is not vacuous, recall that the complex projective space
\(\mathbb{CP}^n\) (\(n \ge 2\)) has a nonzero first Pontrjagin class
(see, e.g., \cite{ref12}). More generally, the same holds for any
complex projective space \(\mathbb{CP}^n\) and for the quaternionic
projective space \(\mathbb{HP}^m\) with \(m \ge 2\). By
Corollary \ref{cor:obstruction}, neither of these manifolds admits a
locally spherically symmetric spray that is projectively affine.

Furthermore, the obstruction is not limited to closed manifolds.
The noncompact manifold \(\mathbb{CP}^2 \times \mathbb{R}\) also has a
nonzero rational Pontrjagin class, so the same conclusion applies.

The geodesic spray of a Riemannian metric is Berwald, hence projectively affine. If the metric is locally spherically symmetric in the sense that its geodesic spray takes the form \eqref{GI} in local charts, Theorem \ref{The 1} applies. This yields the following consequence for Riemannian metrics.

\begin{cor}\label{cor:riemann}
	Let \(M\) be an \(n\)-dimensional smooth manifold with \(n\ge2\). If
	\(M\) admits a Riemannian metric whose geodesic spray is locally
	spherically symmetric, then all
	rational Pontrjagin classes of \(M\) vanish. Equivalently, if some
	rational Pontrjagin class of \(M\) is nonzero, then \(M\) admits no
	such Riemannian metric.
\end{cor}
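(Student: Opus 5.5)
The corollary is a direct specialization of Theorem \ref{The 1}, so the plan is to check that its hypothesis places us inside the theorem, and then invoke the theorem.

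\emph{Step 1: the geodesic spray is projectively affine.} Let \(g\) be a Riemannian metric on \(M\) whose geodesic spray \(G\) is locally spherically symmetric. In any chart, the spray coefficients are \(G^i=\tfrac12\Gamma^i_{jk}(x)y^jy^k\), where \(\Gamma^i_{jk}\) are the Christoffel symbols of the Levi-Civita connection. These coefficients are quadratic in \(y\). Hence the Berwald curvature \(B^i_{jkl}=\partial^3G^i/\partial y^j\partial y^k\partial y^l\) vanishes, and so the Douglas curvature, which is built from \(B^i_{jkl}\) and its traces, vanishes as well. Thus \(G\) is projectively affine.

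\emph{Step 2: matching the hypothesis of Theorem \ref{The 1}.} By assumption, \(M\) has an open cover \(\{U_\alpha\}\) with charts in which \(G^i\) has the form \eqref{GI}. This is exactly the definition of a locally spherically symmetric spray. So \(G\) is a locally spherically symmetric projectively affine spray on \(M\), with \(n\ge 2\).

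As a consistency check, Lemma \ref{g(r)=0} says that in such a chart \(B=f(r)s^2+h(r)\). This is compatible with quadratic \(G^i\): for example, \(A\) is linear in \(s\) and \(B\) has the stated form. No extra argument is needed here, because the lemma is automatically satisfied once projective affineness holds.

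\emph{Step 3: conclusion.} Theorem \ref{The 1} gives that all rational Pontrjagin classes of \(M\) vanish. The equivalent formulation is the contrapositive: if some rational Pontrjagin class of \(M\) is nonzero, then \(M\) admits no such metric. It is also the special case of Corollary \ref{cor:obstruction} applied to geodesic sprays.

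There is no real obstacle. The only point needing care is Step 1, namely that Berwald implies vanishing Douglas curvature, and this follows immediately from the definition of the Douglas tensor as a combination of the Berwald tensor and its traces.
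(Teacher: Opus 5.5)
Your proposal is correct and matches the paper's argument. The paper's justification consists of the same two checks: the Riemannian geodesic spray is Berwald, hence has vanishing Douglas curvature, and local spherical symmetry of that spray is exactly the hypothesis of Theorem \ref{The 1}, which then applies directly, with the second formulation being its contrapositive.
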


\begin{rem}\label{rem:riemann-local}
	In the literature, a \emph{spherically symmetric Riemannian manifold}
	is often defined globally: it requires a single chart covering the
	entire manifold with image a ball \(B_R\subset\mathbb{R}^n\), and a
	metric of the warped product form \(g=dr^2+\phi^2(r)\tilde{g}\) in
	polar coordinates. Such a manifold is diffeomorphic to a ball (or
	\(\mathbb{R}^n\)), hence contractible, and its Pontrjagin classes
	vanish trivially. The condition in Corollary \ref{cor:riemann} is
	strictly local and applies to the geodesic spray rather than the
	global metric. Consequently, the corollary covers Riemannian metrics
	on topologically non-trivial manifolds whose geodesic sprays are
	locally spherically symmetric, extending the classical vanishing
	phenomenon beyond the contractible setting.
\end{rem}

In the Riemannian setting, the space forms are the natural examples of
globally spherically symmetric sprays; their tangent bundles are
stably trivial, so all their rational Pontrjagin classes vanish and
they do not provide examples with nonzero Pontrjagin class.
	In the general
	(non-Riemannian) spray setting the round sphere provides a genuine
	globally spherically symmetric spray, but its tangent bundle again has
	vanishing Pontrjagin classes. This leads to the following natural question.
	
\begin{problem}\label{prob:rigidity}
Let \(M\) be a smooth manifold admitting a locally spherically
symmetric spray (not necessarily projectively affine). Must all
rational Pontrjagin classes of \(M\) vanish?
\end{problem}

A manifold with a locally spherically symmetric spray and some nonzero
rational Pontrjagin class would be of considerable interest. It would
show that local spherical symmetry alone does not force vanishing.
Moreover, by Theorem \ref{The 1}, such a spray cannot be projectively
affine; hence such a manifold would underscore the indispensable role of the
projectively affine hypothesis in Theorem \ref{The 1}. The existence
of such a manifold remains open.

	The paper is organized as follows. Section \ref{p2} reviews the basic
	notions of spray geometry and Chern--Weil theory. Section \ref{p3}
	establishes the geometric characterizations of spherically symmetric
	projectively affine sprays, including the key restriction
	$B(r,s)=f(r)s^2+h(r)$ (Lemma \ref{g(r)=0}), the symmetry
	$\Delta_{kl}=0$ (Proposition \ref{corD}), and the Berwald normal form
	(Theorem \ref{Thm 2}). Section \ref{p4} proves the vanishing theorem
	(Theorem \ref{The 1}) and records its Berwald and locally projectively
	flat consequences. Section \ref{ex} presents explicit examples,
	including non-Berwald and non-projectively-flat sprays, and shows the
	sharpness of the projectively affine condition (Theorem \ref{thm-existence}).

\section{Preliminaries}\label{p2}

In this section, we recall the basic notions of spray geometry, the
curvature tensors used below, the \(S\)-curvature, the \(\chi\)-curvature,
and the Chern--Weil representatives of Pontrjagin classes.
Standard
references are \cite{ref11} and the survey \cite{LiShen2024Survey} for
spray geometry, and \cite{ref12, ref10} for characteristic classes and
the Chern--Weil theory.
Throughout this paper, we use the Einstein summation convention:
repeated upper and lower indices are summed over their full range
(typically from \(1\) to \(n\)), unless otherwise explicitly stated.

A {\it spray} $G$ on an $n$-dimensional smooth manifold \(M\) is a vector
field on the slit tangent bundle $TM \setminus \{0\}$ which, in local
coordinates $(x^i,y^i)$, is expressed as
\begin{align}
	G= {y^i}\frac{\partial}{\partial x^i}-2{G^i}\frac{\partial}{\partial y^i},\label{G}
\end{align}
where the $G^i$ are positively homogeneous of degree two in $y$ and
satisfy
\begin{align}
	{G^i}(x,\lambda y)={\lambda}^2{G^i}(x,y), \quad\lambda > 0.\nonumber
\end{align}
The functions $G^i$ are called the {\it spray coefficients}.

The {\it geodesics} of $G$ are the projections of integral curves
$c(t)$ of $G$, i.e., $c=\pi\circ\tilde{c}(t)$. In this case,
$\tilde{c}(t)=\dot{c}(t)$. In local coordinates, the geodesics are
characterized by
\begin{align}
	\frac{\mathrm{d^2} x^i}{\mathrm{d} t^2}+2G^i \Bigl(x(t),\frac{\mathrm{d} x}{\mathrm{d} t} \Bigr)=0.\nonumber
\end{align}

Two sprays $\bar{G}$ and $G$ are called {\it projectively equivalent}
if there exists a function $P(x,y)$, positively homogeneous of degree
one in $y$, such that
\begin{align}
	\bar{G}^i(x,y)=G^i(x,y)+P(x,y)y^i.\nonumber
\end{align}
The scalar function $P(x,y)$ is called the {\it projective factor}.
Projectively equivalent sprays have the same geodesics as point sets,
up to reparametrization. A spray $G$ is called {\it locally
projectively flat} if every point admits a coordinate neighborhood in
which $G^i(x,y)=P(x,y)y^i$ for some positively homogeneous function $P$
of degree one; equivalently, its geodesics are locally straight lines.

Put
\begin{align}
	N^i_j:=\frac{\partial G^i}{\partial y^j},\ \quad \Gamma^i_{jk}:=\frac{\partial^2 G^i}{\partial y^j \partial y^k},\nonumber
\end{align}
the horizontal covariant derivative is given by
\begin{align}
	\frac{\delta}{\delta x^i}=\frac{\partial}{\partial x^i}-N^j_i\frac{\partial}{\partial y^j}.\nonumber
\end{align}
To simplify notation, we denote $A^*_{*|j}=\frac{\delta}{\delta x^j}A^*_*$ and $A^*_{*\cdot j}=\frac{\partial}{\partial y^j}A^*_*$.

The local connection $1$-forms and curvature forms of the Berwald
connection are given by
\begin{align}\label{Ber}
	\omega ^i_j=\Gamma ^i_{jk}dx^k,\ \quad \Omega^i_j:=d\omega^i_j-\omega^k_j \wedge \omega^i_k.
\end{align}
Let $\omega^i:=dx^i$ and $\omega^{n+i}:=dy^i+N^i_jdx^j$. Then the
connection forms $\omega^i_j$ satisfy
$
	d\omega^i=\omega^j \wedge \omega^i_j,\nonumber
$
and the curvature forms are given by
\begin{align}
	\Omega^i_j=\frac{1}{2}R_{j\;kl}^{\;i}dx^k\wedge dx^l-B_{jkl}^{i}dx^k\wedge \omega^{n+l},\label{o}
\end{align}
where $R_{j\;kl}^{\;i}$ and $B_{jkl}^{i}$ denote the Riemann curvature
tensor and the Berwald curvature tensor, respectively.

In local coordinates, the Riemann curvature and Berwald curvature are
given by
\begin{align}
	R_{j\;kl}^{\;i} &=\frac{\delta \Gamma^i_{jl}}{\delta x^k} - \frac{\delta \Gamma^i_{jk}}{\delta x^l} + \Gamma^i_{km} \Gamma^m_{jl} - \Gamma^i_{lm} \Gamma^m_{jk},\quad
	B_{jkl}^{i} = \frac{\partial^3 G^i}{\partial y^j \partial y^k \partial y^l}.\nonumber
\end{align}
The two-index {\it Riemann curvature tensor} of a spray is given by
$R^i_{\;k}=y^jR^{\;i}_{j\;kl}y^l$ and hence
\begin{align}
	R_{\;k}^{i}=2\frac{\partial G^i}{\partial x^k} - \frac{\partial^2 G^i}{\partial x^j\partial y^k}y^j+2{G^j}\frac{\partial^2 G^i}{\partial y^j\partial y^k}-\frac{\partial G^i}{\partial y^j}\frac{\partial G^j}{\partial y^k}.\nonumber
\end{align}
$R^i_{\;k}$ and  $R^{\;i}_{j\;kl}$ determine each other by the
following identity:
\begin{align}
	R^{\;i}_{j\;kl}=\frac{1}{3}\left(\frac{\partial^2 R^i_{\;k}}{\partial y^j \partial y^l}-\frac{\partial^2 R^i_{\;l}}{\partial y^j \partial y^k}\right).\label{5}
\end{align}

A spray with vanishing Berwald curvature \(B^i_{jkl}=0\) is called a
{\it Berwald spray}; equivalently, \(G^i=\tfrac12\Gamma^i_{jk}(x)y^jy^k\).
The {\it Douglas curvature tensor} $D^i_{jkl}$ \cite{Douglas} is
\begin{align}\label{Dou}
	D^i_{jkl}=B^i_{jkl}-\frac{2}{n+1}\left[ E_{jk}\delta^i_l+E_{jl}\delta^i_k+E_{kl}\delta^i_j+\frac{\partial E_{jk}}{\partial y^l}y^i \right],
\end{align}
where $E_{ij}=\tfrac12B^m_{ijm}$. A spray is {\it projectively affine}
if \(D^i_{jkl}=0\).
The Douglas curvature is projectively invariant: it is unchanged under
the projective deformation \(\bar{G}^i=G^i+Py^i\) \cite[Chapter~3]{ref11}.
Since \(B^i_{jkl} =0\Rightarrow E_{ij}=0\), Berwald sprays are
projectively affine; the converse fails, because projective affineness
only constrains the projective class of the geodesics, not the quadratic
dependence of \(G^i\) on \(y\).

We record the following hierarchy of projective conditions, which will
be used throughout the paper:
\[
\text{locally projectively flat}
\;\subsetneq\;
\text{projectively affine (Douglas $=0$)}
\;\subsetneq\;
\text{general sprays}.
\]
The first inclusion is strict: a projectively affine spray has
geodesics that are locally those of some affine connection, but these
geodesics need not be straight lines. The Douglas curvature is
projectively invariant: it is unchanged under the projective
deformation $\bar{G}^i=G^i+Py^i$. This invariance is what makes the
Douglas curvature the natural obstruction to projective flatness, and
it is the reason why the projectively affine condition plays a central
role in the present paper.

Let $R=R(x,y)$ and $\gamma_k=\gamma_k(x,y)$ be functions satisfying
$\gamma_ky^k=R$. A spray is said to be of {\it scalar curvature} if
\begin{align}
	R^i_{\;k}=R\delta^i_k-\gamma_{k}y^i.\nonumber
\end{align}
In particular, $G$ is said to be of {\it isotropic curvature} if it is
of scalar curvature with $\gamma_{k}=\frac{1}{2}R_{\cdot k}$, i.e.,
\begin{align}
	R^i_{\;k}=R\delta^i_k-\frac{1}{2}R_{\cdot k}y^i.\nonumber
\end{align}

Let $G$ be a spray of the form \eqref{G} on a manifold \(M\),
$dv=\sigma(x)|dx^1\cdots dx^n|$, $\sigma(x)>0$, be a smooth positive
density on \(M\). Then the {\it $S$-curvature} of $(G,dv)$ is given by
\begin{align}
	S=\frac{\partial G^m}{\partial y^m}-y^m\frac{\partial}{\partial x^m}\ln{ \sigma }.\nonumber
\end{align}

The {\it $\chi$-curvature} $\chi=\chi_k\mathrm{d}x^k$ is defined by
$
	\chi_k=\frac{1}{2}(S_{\cdot k|m}y^m-S_{|k}).\nonumber
$
In local coordinates,
\begin{align}
	\chi_k=\frac{1}{2}\left( y^i\frac{\partial^2 \Pi}{\partial x^i \partial y^k}-2G^i\frac{\partial^2 \Pi}{\partial y^i    \partial y^k}-\frac{\partial \Pi}{\partial x^k} \right),\label{chi}
\end{align}
where $\Pi : =\frac{\partial G^m}{\partial y^m}$ and $\chi_k$ is
independent of $dv$. The equation \eqref{chi} can be found in
\cite{ref21}.

Let $\varepsilon$ be a real vector bundle of rank $r$, and let
$\varepsilon _{\mathbb{C}}=\varepsilon \otimes_\mathbb{R} \mathbb{C}$ be
its complexification. Define the total Chern class of
$\varepsilon_{\mathbb{C}}$ by
\[
c(\varepsilon_{\mathbb{C}}):=1+c_1(\varepsilon_{\mathbb{C}})+c_2(\varepsilon_{\mathbb{C}})+\cdots +c_r(\varepsilon_{\mathbb{C}}),
\]
where $c_k(\varepsilon_{\mathbb{C}})\in H^{2k}(M;\mathbb{Z})$ and
\(H^{2k}(M;\mathbb{Z})\) denotes the \(2k\)-th singular cohomology group
of \(M\) with integer coefficients. Then the \(k\)-th Pontrjagin class of
the real vector bundle \(\varepsilon\) is given by
\begin{align}
	p_k(\varepsilon)=(-1)^kc_{2k}(\varepsilon_{\mathbb{C}})\in H^{4k}(M;\mathbb{Z}).\nonumber
\end{align}

Given a connection $D$ on $\varepsilon$, the Pontrjagin classes can also
be represented by differential forms representing de Rham cohomology
classes \cite{ref3}. Let $\Omega=(\Omega^i_j)$ as in \eqref{o} be the
curvature matrix of $D$. Define the forms $\sigma_i(\Omega)$ by
\begin{align}
	\det(I+t\Omega) = \sum_{r=0}^{n}\sigma_r(\Omega)t^r,\label{p}
\end{align}
where $\sigma_i(\Omega)$ is the $i$-th elementary symmetric polynomial
in the entries of $\Omega$, with multiplication interpreted as wedge
products. With the normalization adopted in \cite[p.~310, Corollary
C.10]{ref12}, the form $\sigma_{2k}(\Omega)\in\Omega^{4k}(M)$ is closed
and the image of $p_k(\varepsilon)$ in real cohomology satisfies
\[
p_k(\varepsilon) = \frac{1}{(2\pi)^{2k}} \bigl[\sigma_{2k}(\Omega)\bigr] \in H^{4k}(M;\mathbb{R}).
\]
Consequently, the total real Pontrjagin class
\[
p(\varepsilon) = 1+p_1(\varepsilon)+p_2(\varepsilon)+\cdots
\]
is determined by the even-degree coefficients of the determinant
$\det(I+t\Omega)$.

	\section{\textbf{Geometric characterizations of spherically symmetric
			projectively affine sprays}}\label{p3}
	
	\subsection{Curvature identities and the projectively affine
		condition}\label{p3.1}
	
	In this subsection, we establish the geometric characterizations of
	spherically symmetric projectively affine sprays that will be used in
	Section \ref{p4}. We first recall the coordinate conventions and the
	explicit \(\chi\)-curvature, then derive the skew-symmetric vertical
	derivative identities, and finally prove that the projectively affine
	condition forces \(B(r,s)=f(r)s^2+h(r)\), which in turn implies
	\(\Delta_{kl}=0\), where $\Delta_{kl}$ is defined in \eqref{DE}.
	
	Here and throughout, subscripts \(``r"\) and \(``s"\) denote partial
	derivatives with respect to \(r\) and \(s\), respectively. For example,
	\(A_s = \partial A /\partial s\), \(A_{rs} =  \partial^2 A / \partial r\partial s \),
	\(A_{ss}=  \partial^2 A / \partial s^2 \), etc.
	
	In local spherical coordinates, indices are raised and lowered using the
	Euclidean metric \(\delta_{ij}\); in particular,
	$
	y_m=\delta_{ml}y^l.
	$
	By \eqref{rs}, we have \(r=|x|^2, s= \langle x,y \rangle / |y|\).
	Consequently,
	\begin{align}
		r_{x^k}=2x_k,\quad r_{\cdot k}=0;\quad s_{x^k}=\frac{y_k}{|y|},\quad
		s_{\cdot k}=\frac{\partial s}{\partial y^k}=\frac{1}{|y|^2}(|y|x_k-sy_k).\label{s}
	\end{align}

We first recall the local decomposition of the Riemann curvature. As
shown in \cite{ref18}, the two-index Riemann curvature tensor of a
spherically symmetric spray takes the form
\begin{equation}
	R^i_{\;k} = R\delta^i_k - \iota_k x^i - \gamma_k y^i,\label{sym}
\end{equation}
with \(\gamma_ky^k=R\) and \(\iota_ky^k=0\).
	
	\begin{prop}\label{prop1}
		Let \(G\) be a spherically symmetric spray on an open subset
		\(\Omega\subset \mathbb{R}^n\) with \(G^i\) expressed in \eqref{GI}.
		Then the \(\chi\)-curvature of \(G\) is given by
		\begin{align}
			\chi_k&=-\frac{1}{2}(\vert y \vert x_k-sy_k)H(r,s),\label{chi_1}
		\end{align}
		where
		\begin{align}
			H(r,s)&=\nonumber(-s^2+r)\left[ (-2s^2B+2rB-1)B_{sss}-2sB_{rss}-6sBB_{ss}+6BB_s+2B_{rs}  \right] \\
			& \quad +(n+1)\left[ 2 \left[(r-s^2)A_{ss}- s A_s + A\right] B -2 s A_{rs} -A_{ss} + 2 A_r\right].\label{h0}
		\end{align}
	\end{prop}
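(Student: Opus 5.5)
The plan is to compute $\chi_k$ directly from the local formula \eqref{chi}. We first compute $\Pi=\partial G^m/\partial y^m$ for $G^i=|y|Ay^i+|y|^2Bx^i$, and then differentiate it using the rules \eqref{s}.

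\textbf{Step 1: computing $\Pi$.} Using $\partial|y|/\partial y^m=y_m/|y|$ and $s_{\cdot m}=(|y|x_m-sy_m)/|y|^2$, we get
\[
\frac{\partial G^i}{\partial y^m}=A\frac{y_m y^i}{|y|}+|y|A\delta^i_m+A_s(|y|x_m-sy_m)\frac{y^i}{|y|}+2By_mx^i+B_s(|y|x_m-sy_m)x^i .
\]
Contracting with $i=m$ and using $y_mx^m=|y|s$ and $x_mx^m=r$ gives
\[
\Pi=|y|\Phi(r,s),\qquad \Phi:=(n+1)A+(r-s^2)B_s+2sB .
\]
The decisive structural fact is that $\Pi$ is again a function of type $|y|\Phi(r,s)$. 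This explains the two groups of terms in $H$: the factor $(n+1)$ multiplies the $A$-part, and the remaining terms come from $(r-s^2)B_s+2sB$.

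\textbf{Step 2: derivatives of $\Pi$.} For a general $\Pi=|y|\Phi(r,s)$ we record the needed derivatives via \eqref{s}:
\[
\Pi_{x^k}=2|y|\Phi_r x_k+\Phi_s y_k,\qquad \Pi_{\cdot k}=\Phi\frac{y_k}{|y|}+\Phi_s\frac{|y|x_k-sy_k}{|y|}.
\]
Differentiating $\Pi_{\cdot k}$ once more, in $x^i$ and in $y^i$, we contract with $y^i$ and with $G^i$. The basic contractions are:
\begin{itemize}
\item $y^i x_i=|y|s$ and $y^i y_i=|y|^2$;
\item $G^ix_i=|y|^2(As+Br)$ and $G^iy_i=|y|^3(A+Bs)$;
\item $G^i(|y|x_i-sy_i)=|y|^3B(r-s^2)$, where the $A$-part cancels.
\end{itemize}
The last identity is the key simplification. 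In $\Pi_{\cdot i\cdot k}$, every term except those proportional to $y_k$ involves the vector $|y|x-sy$ contracted against $G$ or $y$. So only $B$ and $(r-s^2)$ survive in the $G^i$-term.

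\textbf{Step 3: assembling $\chi_k$.} We collect each term of $\chi_k$ as $\alpha\,x_k|y|+\beta\,y_k$ with coefficients functions of $(r,s)$ (after factoring powers of $|y|$). Homogeneity gives $\chi_ky^k=0$, since $\Pi_{\cdot k}y^k=\Pi$ and the standard identity for $\chi$ forces this. Hence $\alpha s+\beta=0$, and $\chi_k=\alpha(|y|x_k-sy_k)$. It therefore suffices to compute only the coefficient of $|y|x_k$. That coefficient comes from three contributions:
\begin{itemize}
\item $y^i\Pi_{x^i\cdot k}$, which yields $\Phi_s+2s\Phi_{rs}$-type terms and $-2\Phi_r$ from $-\Pi_{x^k}$;
\item $-2G^i\Pi_{\cdot i\cdot k}$, which yields $-2B[(r-s^2)\Phi_{ss}-s\Phi_s+\Phi]$;
\item combining these gives $\alpha=-\tfrac12 H$ with $H=2B[(r-s^2)\Phi_{ss}-s\Phi_s+\Phi]-2s\Phi_{rs}-\Phi_{ss}+2\Phi_r$, up to the exact form of the lower-order terms.
\end{itemize}
Substituting $\Phi=(n+1)A+\Psi$ with $\Psi=(r-s^2)B_s+2sB$, the $A$-part reproduces the second bracket of \eqref{h0} verbatim.

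\textbf{Main obstacle.} The hardest step is the $\Psi$-part. We must expand $(r-s^2)\Psi_{ss}-s\Psi_s+\Psi$, $\Psi_{rs}$, $\Psi_{ss}$ and $\Psi_r$ in terms of the derivatives of $B$, and check that everything factors as $(r-s^2)$ times the first bracket of \eqref{h0}. For example, $\Psi_{ss}=(r-s^2)B_{sss}-2sB_{ss}+2B_s$ up to cancellation, and the $-B_{ss}$, $B_r$ terms must cancel among $-\Psi_{ss}+2\Psi_r-2s\Psi_{rs}$. I would carry out this expansion carefully, tracking the cancellation of all terms not divisible by $(r-s^2)$. As a consistency check, I would test the special case $A=0$ with $B$ constant, where a direct computation is easy.
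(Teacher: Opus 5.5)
Your proposal is correct and follows the paper's proof essentially step for step. It uses the same $\Pi=|y|\Phi$ with $\Phi=(n+1)A+2sB+(r-s^2)B_s$, the same contractions (the $A$-part of $G^i$ drops out of $2G^i\Pi_{\cdot i\cdot k}$ because $y^i\Pi_{\cdot i\cdot k}=0$), the same intermediate $H=2\Phi_r-2s\Phi_{rs}-\Phi_{ss}+2B\bigl[(r-s^2)\Phi_{ss}+\Phi-s\Phi_s\bigr]$, and the same final substitution. For your $\Psi=(r-s^2)B_s+2sB$ one indeed gets $2\Psi_r-2s\Psi_{rs}-\Psi_{ss}=(r-s^2)(2B_{rs}-2sB_{rss}-B_{sss})$ and $(r-s^2)\Psi_{ss}+\Psi-s\Psi_s=(r-s^2)\bigl[3B_s-3sB_{ss}+(r-s^2)B_{sss}\bigr]$, which gives the first bracket of \eqref{h0}.

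The only deviation is that you use $\chi_ky^k=0$ to read off just the $|y|x_k$-coefficient, whereas the paper combines the $y_k$-terms directly into $-2\Phi_r(|y|x_k-sy_k)$. Also, the contribution of $y^i\Pi_{x^i\cdot k}$ to that coefficient is $2s\Phi_{rs}+\Phi_{ss}$, not $\Phi_s+2s\Phi_{rs}$ as you wrote, though your final formula for $H$ is right.
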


\begin{proof}
	Let \(\Pi:=\frac{\partial G^m}{\partial y^m}\). Since \(s_{\cdot k}y^k=0\)
	by \eqref{rs} and \eqref{s}, a direct computation gives
	\[
	\Pi=\frac{\partial}{\partial y^m}\bigl(|y|A\,y^m+|y|^2B\,x^m\bigr)
	=|y|\Psi(r,s), \label{Psi}
	\]
where
		\begin{align}\label{Psi}
			\Psi (r,s)=(n+1)A+2sB+(r-s^2)B_s.
		\end{align}
	Differentiating \eqref{Psi} with respect to \(x^k\) and \(y^i\) yields
	\begin{align}\label{Pi}
		\frac{\partial \Pi}{\partial x^k}=2|y|\Psi_r x_k+\Psi_s y_k,\qquad
		\frac{\partial \Pi}{\partial y^i}=\Psi_s x_i+\frac{1}{|y|}(\Psi-s\Psi_s)y_i.
	\end{align}
	Using \(x_iy^i=s|y|\), \(y_iy^i=|y|^2\) and \(\delta_{ik}y^i=y_k\), and
	substituting \(s_{\cdot k}\) from \eqref{s}, we obtain
	\begin{align}\label{Piyi}
		y^i\frac{\partial^2\Pi}{\partial x^i\partial y^k}
		=(2s\Psi_r+\Psi_s)y_k+(2s\Psi_{rs}+\Psi_{ss})(|y|x_k-sy_k).
	\end{align}
	Differentiating the second identity in \eqref{Pi} with respect to \(y^k\)
	yields
	\begin{align}\label{Piy}
		\frac{\partial^2\Pi}{\partial y^i\partial y^k}
		=s_{\cdot k}x_i\Psi_{ss}-\frac{1}{|y|}s_{\cdot k}y_is\Psi_{ss}
		+\frac{1}{|y|^3}(\Psi-s\Psi_s)(|y|^2\delta_{ik}-y_iy_k).
	\end{align}
	Contracting \eqref{Piy} with \(x^i\) and \(y^i\), and substituting
	\eqref{s}, gives
	\begin{align}
		x^i\frac{\partial^2\Pi}{\partial y^i\partial y^k}
		&=\frac{1}{|y|^2}\bigl[(r-s^2)\Psi_{ss}+\Psi-s\Psi_s\bigr](|y|x_k-sy_k),
		\quad
		y^i\frac{\partial^2\Pi}{\partial y^i\partial y^k}=0,
		\notag
	\end{align}
	and hence
	\begin{align}\label{2G}
		2G^i\frac{\partial^2\Pi}{\partial y^i\partial y^k}
		=2B\bigl[(r-s^2)\Psi_{ss}+\Psi-s\Psi_s\bigr](|y|x_k-sy_k).
	\end{align}
	Substituting \eqref{Pi}, \eqref{Piyi}, and \eqref{2G} into \eqref{chi}
	yields
	\[
	\chi_k=-\frac12\bigl[2s\Psi_{rs}+\Psi_{ss}-2\Psi_r
	-2B\bigl((r-s^2)\Psi_{ss}+\Psi-s\Psi_s\bigr)\bigr](|y|x_k-sy_k),
	\]
	i.e., \(\chi_k=-\frac12(|y|x_k-sy_k)H(r,s)\) with
	\begin{align}\label{Hr}
		H(r,s)=2\Psi_r-2s\Psi_{rs}-\Psi_{ss}
		+2B\bigl[(r-s^2)\Psi_{ss}+\Psi-s\Psi_s\bigr].
	\end{align}
Finally, differentiating \eqref{Psi} yields
		\begin{align}
			\Psi_r&\nonumber=(n+1)A_r+2sB_r+B_s+(r-s^2)B_{rs}, \quad
			\Psi_s =(n+1)A_s+2B+(r-s^2)B_{ss},\\ \nonumber
			\Psi_{rs}&=(n+1)A_{rs}+2B_r+B_{ss}+(r-s^2)B_{rss},\quad
			\Psi_{ss} =(n+1)A_{ss}+2B_s-2sB_{ss}+(r-s^2)B_{sss}.
		\end{align}
Substituting these expressions into \eqref{Hr}, we  obtain \eqref{h0}.  This completes the
	proof.
\end{proof}

The Riemann curvature of a spherically symmetric spray was first computed
in \cite{ref18}:
\begin{align}
	R_{\;k}^{i} &= Y(r,s) \left(|y|^2 \delta_{k}^{i} - y^k y^i \right)+P(r,s)\left(|y| x^k-s y^k \right) |y| x^i +X(r,s) \left( |y| x^k - s y^k \right) y^i,\label{R}
\end{align}
where
\begin{align}
	P(r,s) & = \left[2(r-s^2)B-1\right]B_{ss} -2sB_{rs}  -(r-s^2) B_s^2-2sBB_s+4B_r+4B^2,\label{H1} \\
X(r,s) & =2 \left[(r-s^2)A_{ss}- s A_s + A\right] B -2 s A_{rs} -A_{ss} + 4 A_r - A A_s - \left[(r-s^2) A_s +s A+1 \right] B_s, \notag \\
Y(r,s) &= \left[ 2(r-s^2)B-1 \right] A_s- 2 s A_r + 2(1 + sA)B + A^2.  \notag
\end{align}

The following identities will be used in the proof of Theorem \ref{The 1}.
\begin{lem}\label{lem-skew}
	Let \(G\) be a spherically symmetric spray as above. Then
	\begin{align}
		\chi_{k\cdot l}-\chi_{l\cdot k}&=\frac{1}{\vert y \vert}(x_ly_k-x_ky_l)H(r,s).\label{chic}\\
		\iota_{k\cdot l}-\iota_{l\cdot k}&=3(x_ly_k-x_ky_l)P(r,s),\label{iotac}\\
		\gamma_{k\cdot l}-\gamma_{l\cdot k}&=\frac{1}{\vert y \vert}(x_ly_k-x_ky_l)Q(r,s),\label{gammac}
	\end{align}
	where \(H(r,s)\) and \(P(r,s)\) are given in \eqref{h0} and \eqref{H1}, and
	\begin{align}
		Q(r,s)=2X(r,s)+Y_s(r,s)=6\left[(r-s^2)A_{ss}- s A_s + A\right] B -6s A_{rs} -3A_{ss} + 6 A_r.\label{h2}
	\end{align}
\end{lem}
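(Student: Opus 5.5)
The plan is to reduce all three identities to one elementary computation: the skew-symmetrized vertical derivative of the basic covector
\[
v_k:=|y|x_k-s\,y_k .
\]
By \eqref{chi_1} we have \(\chi_k=-\frac12 H\,v_k\). Comparing \eqref{R} with \eqref{sym}, lowering indices with \(\delta_{ij}\), we read off
\[
R=Y|y|^2,\qquad \iota_k=-P\,|y|\,v_k,\qquad \gamma_k=Y\,y_k-X\,v_k .
\]
One checks directly that \(\gamma_ky^k=R\) and \(\iota_ky^k=0\), using \(v_ky^k=0\). So every quantity in the lemma is built from \(v_k\), \(y_k\), powers of \(|y|\), and functions of \((r,s)\). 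We only need the skew parts of the \(y^l\)-derivatives of these building blocks. Throughout, write \(w_{kl}:=x_ly_k-x_ky_l\).

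\textbf{Step 1: the building-block identities.} From \eqref{s} we have \(s_{\cdot l}=v_l/|y|^2\) and \(r_{\cdot l}=0\). Hence for any \(\phi(r,s)\), the contribution \(\phi_s(s_{\cdot l}v_k-s_{\cdot k}v_l)\) vanishes, since \(v_lv_k\) is symmetric in \(k,l\). Next, \(s_{\cdot l}y_k-s_{\cdot k}y_l=w_{kl}/|y|\). Finally, \(\partial_{y^l}|y|=y_l/|y|\), which gives \(y_lv_k-y_kv_l=-|y|\,w_{kl}\). Differentiating \(v_k\) directly,
\[
v_{k\cdot l}=\frac{y_l}{|y|}x_k-s_{\cdot l}y_k-s\,\delta_{kl},
\]
and therefore
\[
v_{k\cdot l}-v_{l\cdot k}=-\frac{2}{|y|}w_{kl}.
\]

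\textbf{Step 2: assemble the three identities.}
\begin{itemize}
\item For \(\chi\): only the \(v\)-term survives, giving \(-\frac12H\cdot(-2w_{kl}/|y|)=H\,w_{kl}/|y|\). This is \eqref{chic}.
\item For \(\iota\): the derivative of the factor \(|y|\) contributes \(P\,w_{kl}\). The \(v\)-term contributes \(-P|y|\cdot(-2w_{kl}/|y|)=2P\,w_{kl}\). The \(P_s\)-term vanishes by Step 1. The total is \(3P\,w_{kl}\), which is \eqref{iotac}.
\item For \(\gamma\): the term \(Y y_k\) contributes \(Y_s\,w_{kl}/|y|\). The term \(-Xv_k\) contributes \(2X\,w_{kl}/|y|\), and its \(X_s\)-part vanishes. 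So the skew part is \((2X+Y_s)\,w_{kl}/|y|\). This gives \eqref{gammac} with \(Q=2X+Y_s\).
\end{itemize}

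\textbf{Step 3: the explicit form of \(Q\).} It remains to verify the closed formula \eqref{h2}. I will differentiate \(Y\) in \(s\):
\[
Y_s=\bigl[2(r-s^2)B-1\bigr]A_{ss}-4sBA_s+2(r-s^2)B_sA_s-2A_r-2sA_{rs}+2AB+2(1+sA)B_s+2AA_s .
\]
Then I add twice the expression for \(X\). The \(AA_s\) terms cancel, and so do the \(B_s\)-terms \((r-s^2)A_sB_s\), \(sAB_s\) and \(B_s\). The remaining terms should collect into
\[
6\bigl[(r-s^2)A_{ss}-sA_s+A\bigr]B-6sA_{rs}-3A_{ss}+6A_r .
\]
This bookkeeping is the only place an error is likely, so I expect it to be the main (though routine) obstacle. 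Everything before it is structural.
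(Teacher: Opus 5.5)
Steps 1--2 of your proposal are correct and follow the same route as the paper. The paper also reads off $\iota_k=-P|y|(|y|x_k-sy_k)$ and $\gamma_k=Yy_k-X(|y|x_k-sy_k)$ from \eqref{R}, differentiates $\chi_k,\iota_k,\gamma_k$ in $y^l$, and antisymmetrizes. Your skew-derivative rules for $v_k$, $y_k$, $|y|$ and $\phi(r,s)$ just organize the paper's ``direct subtraction'' more transparently, and they check out, e.g.\ $v_{k\cdot l}-v_{l\cdot k}=-2w_{kl}/|y|$ and the $1+2=3$ count for $\iota$.

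The one actual error is in Step 3, which the paper itself does not spell out. Your displayed $Y_s$ drops a term, since
\[
\frac{\partial}{\partial s}\bigl[2(1+sA)B\bigr]=2AB+2sA_sB+2(1+sA)B_s,
\]
and you kept only $2AB+2(1+sA)B_s$. With your expression, the coefficient of $sA_sB$ in $2X+Y_s$ would be $-4-4=-8$, and the closed form \eqref{h2} would not come out. Restoring the missing $+2sA_sB$ gives $-4-4+2=-6$, as required. The remaining terms then give exactly \eqref{h2}: $6(r-s^2)BA_{ss}$, $6AB$, $-6sA_{rs}$, $-3A_{ss}$, $6A_r$, with the $AA_s$ and $B_s$ terms cancelling as you say.
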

\begin{proof}
	Differentiating \eqref{chi_1} with respect to \(y^l\), we obtain
	\[
	\chi_{k\cdot l}=-\frac12\Bigl[\Bigl(\frac{x_ky_l-x_ly_k}{|y|}+\frac{sy_ky_l}{|y|^2}-s\delta_{kl}\Bigr)H(r,s)+\frac{(|y|x_k-sy_k)(|y|x_l-sy_l)}{|y|^2}H_s(r,s)\Bigr],
	\]
	and antisymmetrizing in \(k,l\) yields \eqref{chic}. From \eqref{R},
	\[
	\iota_k=-|y|(|y|x_k-sy_k)P(r,s),\qquad
	\gamma_k=-(|y|x_k-sy_k)X(r,s)+y_kY(r,s).
	\]
	Differentiating with respect to \(y^l\) gives
	\begin{align}
		\iota_{k\cdot l}&=-\left(2x_ky_l-x_ly_k-|y|s\delta_{kl}\right)P(r,s)-\frac{(|y|x_k-sy_k)(|y|x_l-sy_l)}{|y|}P_s(r,s),\notag\\
		\gamma_{k\cdot l}&=\frac{1}{|y|^2}\Bigl[-|y|(x_ky_l-x_ly_k)X-sy_ky_lX+|y|^2(sX+Y)\delta_{kl}\notag\\
		&\qquad-(|y|x_k-sy_k)(|y|x_l-sy_l)X_s+y_k(|y|x_l-sy_l)Y_s\Bigr].\notag
	\end{align}
	A direct subtraction yields \eqref{iotac} and \eqref{gammac}.
\end{proof}

	We now establish a characterization of spherically symmetric projectively
	affine sprays. The following lemma shows that the vanishing of the
	Douglas curvature forces the coefficient \(B(r,s)\) to be quadratic in
	\(s\).

	\begin{lem}\label{g(r)=0}
		Let \(G\) be a spherically symmetric spray on an open subset
		\(\Omega\subset \mathbb{R}^n\), \(n \ge 2\), with spray coefficients
		given by \eqref{GI}. Then \(G\) is projectively affine if and only if
		\[
		B(r,s) = f(r)s^2 + h(r)
		\]
		for some smooth functions \(f(r)\) and \(h(r)\). Consequently,
		\[
		G^i = |y| A(r,s) y^i + \big[f(r)\langle x,y\rangle^2 + h(r)|y|^2\big] x^i.
		\]
	\end{lem}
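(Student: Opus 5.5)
We need to characterize when D^i_{jkl}=0 for a spherically symmetric spray. The approach is projective invariance. The term |y|A y^i is a projective deformation with P=|y|A, so D of G equals D of \(\tilde G^i=|y|^2B(r,s)x^i\). We may therefore assume A=0 and study \(\tilde G^i = |y|^2 B x^i\).

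A convenient criterion for projective affineness is the following: D=0 if and only if \(\tilde G^i = \frac12\Gamma^i_{jk}(x)y^jy^k + P y^i\) for some 1-homogeneous P. Equivalently, for each i,j the expression \(\tilde G^i y^j - \tilde G^j y^i\) must be a cubic polynomial in y. Here it equals \(|y|^2B(r,s)(x^iy^j-x^jy^i)\), so \(|y|^2B\) must be a quadratic polynomial in y for each x. This holds at points where \(x^iy^j-x^jy^i\not\equiv0\), which uses n\ge2 and x\neq0; the origin is handled by continuity. Taking instead the contraction with the Douglas formula directly is the alternative if one wants to avoid quoting this criterion.

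To make the argument self-contained, I would compute \(D^i_{jkl}\) for \(\tilde G\) and contract it with \(x_i\) and with vectors orthogonal to x and y. Writing \(\Phi(x,y)=|y|^2B(r,s)\), we get \(\tilde B^i_{jkl}=\Phi_{\cdot jkl}x^i\), \(\tilde E_{jk}=\frac12\Phi_{\cdot jkm}x^m\), and
\[
D^i_{jkl}=\Phi_{\cdot jkl}x^i-\tfrac{2}{n+1}\bigl[E_{jk}\delta^i_l+E_{jl}\delta^i_k+E_{kl}\delta^i_j+E_{jk\cdot l}y^i\bigr].
\]
Choosing i with \(x^i\) in a direction not spanned by \(y\) and the free index directions, for instance by taking a component transverse to x and y when n\ge3 (with the case n=2 treated through the antisymmetric combination \(x^iy^j-x^jy^i\)), shows that D=0 forces \(\Phi_{\cdot jkl}\) to be a combination of the \(\delta\) and \(y\) terms. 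Homogeneity of degree 2 then gives \(\Phi_{\cdot jkl}=0\), that is, \(\Phi\) is quadratic in y. Conversely, if \(\Phi\) is quadratic in y then \(\tilde G\) is Berwald, so D=0.

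The last step is to solve "\(|y|^2B(r,\langle x,y\rangle/|y|)\) is a quadratic form in y" under the O(n)-invariance. An O(n)-invariant quadratic form in y with coefficients depending on x must be \(f(r)\langle x,y\rangle^2+h(r)|y|^2\). For n\ge2 the stabilizer of x acts irreducibly on \(x^\perp\), and one checks that the cross terms must vanish. Dividing by \(|y|^2\) gives \(B=f(r)s^2+h(r)\). Alternatively, differentiating three times in y and contracting yields the ODE \(B_{sss}=0\) together with the condition that the s-linear coefficient vanish, where that vanishing is forced by the behavior of \(|y|\,s\,g(r)=g\langle x,y\rangle|y|\), which is not polynomial. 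This converse direction is a direct check.

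The main obstacle is the rigorous reduction from D=0 to \(\Phi_{\cdot jkl}=0\), specifically the trace-term bookkeeping and the n=2 case. I would handle this by using the cubic-polynomial criterion for \(\tilde G^iy^j-\tilde G^jy^i\), which avoids that bookkeeping. Smoothness of f and h then follows from that of B, and substituting back gives the stated formula for \(G^i\).
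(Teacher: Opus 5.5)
Your argument is correct, but it takes a different route from the paper's after the shared first step. Both proofs begin with the same projective reduction to $\Phi\,x^i$, where $\Phi=|y|^2B(r,s)$.

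\textbf{The paper's route.} It computes $D^i_{jkl}$ explicitly from the third $y$-derivatives of $|y|^2B$ and of $\Pi_0=|y|\bar\Psi$. It then contracts with $y_iz^jz^kz^l$, where $z=x-\frac{\langle x,y\rangle}{|y|^2}y$, and gets the linear ODE $(r-s^2)^2B_{ssss}-(n+8)s(r-s^2)B_{sss}-3(n+3)s(B_s-sB_{ss})=0$. This ODE is integrated twice. At each stage smoothness of $B$ at $s=\pm\sqrt r$ rules out the singular solutions $c(r)(r-s^2)^{-(n+3)/2}$ and $d(r)(r-s^2)^{-3/2}$, which leaves $B_{sss}=0=B_s-sB_{ss}$.

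\textbf{Your route.} You use the identity $D^i_{jkl}=\partial^3_{y^jy^ky^l}\bigl(G^i-\tfrac{1}{n+1}\Pi\,y^i\bigr)$, which follows at once from the definition of $D$. So $D=0$ makes $G^i-\tfrac{1}{n+1}\Pi\,y^i$ a quadratic polynomial in $y$; this uses connectedness of $\mathbb{R}^n\setminus\{0\}$, i.e.\ $n\ge2$. The problem then becomes algebraic: when is $\Phi$ a quadratic form in $y$?

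\textbf{What each route buys.} Yours avoids the tensor bookkeeping and the ODE. It handles $n=2$ on the same footing as $n\ge3$, and the converse is immediate: quadratic implies Berwald implies $D=0$. The paper's computation shows that a single scalar component of $D$ already forces $B=f(r)s^2+h(r)$, with fully explicit formulas.

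\textbf{Details to tighten.}
\begin{enumerate}
\item You go from ``$\Phi\,(x^iy^j-x^jy^i)$ is a cubic polynomial'' to ``$\Phi$ is a quadratic polynomial''. This needs the remark that the cubic vanishes on the hyperplane $\{x^iy^j-x^jy^i=0\}$, by continuity of $\Phi$ away from $y=0$. It is therefore divisible by that nonzero linear form, which exists for $x\neq0$ because $n\ge2$. This continuity is what replaces the paper's smoothness at $s=\pm\sqrt r$.
\item Drop your middle paragraph. As phrased it does not justify $\Phi_{\cdot j\cdot k\cdot l}=0$: homogeneity is not the mechanism, and for $n=2$ there is no covector annihilating both $x$ and $y$. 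The criterion-based argument does not need it.
\item In the final step, only invariance of $\Phi(x,\cdot)$ under the stabilizer of $x$ is used. This holds because $\Phi$ depends on $y$ only through $|y|$ and $\langle x,y\rangle$. The coefficients depend on $r$ alone and are smooth via $h(r)=B(r,0)$ and $f(r)=\tfrac12B_{ss}(r,0)$.
\end{enumerate}
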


\begin{proof}
	The Douglas curvature is projectively invariant. Since
	\(G^i=G_0^i+Py^i\) with \(G_0^i=|y|^2B(r,s)x^i\) and
	\(P=|y|A(r,s)\), the two sprays are projectively equivalent, hence
	\(D(G)=D(G_0)\). It therefore suffices to compute the Douglas curvature for \(G_0\),
	i.e., for \(A=0\).

	For \(G_0\), we have \(\Pi_0:=\frac{\partial G_0^m}{\partial y^m} =|y|\bar\Psi\) with
	\(\bar\Psi= 2sB+(r-s^2)B_s \), and
	\(B^i_{jkl}=x^i(|y|^2B)_{jkl}\). Substituting into \eqref{Dou}
	yields
	\begin{align}\label{D1}
		D^i_{jkl}= x^i(|y|^2 B)_{\cdot j\cdot k\cdot l}-\frac{1}{n+1}\Big[
		\big(|y|\bar{\Psi}\big)_{\cdot j\cdot k}\delta_l^i +\big(|y|\bar{\Psi}\big)_{\cdot j\cdot l}\delta_k^i +\big(|y|\bar{\Psi}\big)_{\cdot k\cdot l}\delta_j^i +\big(|y|\bar{\Psi}\big)_{\cdot j\cdot k\cdot l}y^i	\Big].
	\end{align}
	By \eqref{s}, a direct computation gives
	\begin{align}\label{yB}
		(|y|^2 B)_{\cdot j\cdot k\cdot l}
		&=|y|^2 B_{sss} s_{\cdot j}s_{\cdot k}s_{\cdot l}+ (B_s-sB_{ss})\Big[
		s_{\cdot j}\big(\delta_{kl}-\tfrac{1}{|y|^2}y_k y_l\big)\notag\\
		&\qquad + s_{\cdot k}\big(\delta_{jl}-\tfrac{1}{|y|^2}y_j y_l\big)+ s_{\cdot l}\big(\delta_{jk}-\tfrac{1}{|y|^2}y_j y_k\big)\Big].
	\end{align}

	Assume \(D^i_{jkl}=0\). Fix \(x\neq0\), \(y\neq0\) with \(r-s^2>0\), and set
	\(z=x-\frac{\langle x,y\rangle}{|y|^2}y\), so that
	\(\langle y,z\rangle=0\), \(\langle x,z\rangle=|z|^2=r-s^2\),
	\(s_{\cdot j}z^j=\frac{r-s^2}{|y|}\).
	Contracting \eqref{D1} with \(y_iz^jz^kz^l\) and substituting
	\eqref{yB} and \(\bar\Psi\) gives
	\[
	y_i D^i_{jkl} z^j z^kz^l=\frac{(r-s^2)^2}{n+1}\left[-(r-s^2)^2 B_{ssss}+(n+8)s(r-s^2)B_{sss}+3(n+3)s(B_s-sB_{ss})\right].
	\]
	Hence
	\begin{align}\label{B01}
		(r-s^2)^2 B_{ssss}-(n+8)s(r-s^2)B_{sss}-3(n+3)s(B_s-sB_{ss})=0.
	\end{align}
	Set \(F=(r-s^2)B_{sss}+3(B_s-sB_{ss})\). Since
	\(F_s=(r-s^2)B_{ssss}-5sB_{sss}\), equation \eqref{B01} becomes
	\[ (r-s^2)F_s-(n+3)sF=0, \]
 whose general solution is
	\(F=c(r)(r-s^2)^{-\frac{n+3}{2}}\). Smoothness of \(B\) at
	\(s=\pm\sqrt r\) forces \(c(r)=0\), hence \(F=0\), i.e.,
	\begin{align}\label{B2}
		(r-s^2)B_{sss}+3(B_s-sB_{ss})=0.
	\end{align}
	Using \((B_s-sB_{ss})_s=-sB_{sss}\), the equation \eqref{B2} becomes
	\[ (r-s^2)(B_s-sB_{ss})_s-3s(B_s-sB_{ss})=0, \]
so
	\(B_s-sB_{ss}=d(r)(r-s^2)^{-3/2}\); smoothness again forces
	\(d(r)=0\), hence \(B_s-sB_{ss}=0\). Equation \eqref{B2} then reduces
	to \((r-s^2)B_{sss}=0\), so \(B_{sss}=0\) on \(r-s^2>0\), and
	integrating three times in \(s\) yields
\begin{equation} B(r,s)=f(r)s^2+h(r). \label{BinLemma3.3} \end{equation}
Since \(r-s^2>0\) corresponds to \(x\) and \(y\) being non-parallel, the
argument shows that \eqref{BinLemma3.3} holds on the dense open subset
\(\{|s|<\sqrt r\}\) of the spherical chart. Since \(B\) is smooth, it
extends continuously to the boundary \(\{|s|=\sqrt r\}\); at the center
\(x=0\) (\(r=s=0\)), \(B(0,0)=h(0)\) by continuity. Hence
\eqref{BinLemma3.3} holds on the whole spherical chart.

Conversely, if \(B(r,s)=f(r)s^2+h(r) \), then
\(B_s-sB_{ss}=B_{sss}=B_{ssss}=0\), and \eqref{D1} gives
\(D_{jkl}^{i}(G_0)=0\). Since \(D(G)=D(G_0)\) by projective invariance,
\(G\) is projectively affine.
\end{proof}

By Lemma \ref{g(r)=0}, a projectively affine spray satisfies
\(B(r,s)=f(r)s^2+h(r)\). Substituting this into the curvature identities,
we find that a specific combination of the vertical derivatives of
\(\gamma_k\) and \(\chi_k\) vanishes. We record this symmetry as follows.
Define the antisymmetric tensor
\begin{equation}\label{DE}
	\Delta_{kl}:=\frac{1}{3}(\gamma_{k\cdot l}-\gamma_{l\cdot k})
	-\frac{1}{n+1}(\chi_{k\cdot l}-\chi_{l\cdot k}).
\end{equation}

	\begin{prop}\label{corD}
		Let \(G\) be a spherically symmetric projectively affine spray on a
		domain \(U\subset\mathbb{R}^n\), \(n\ge2\), with spray coefficients
		given by \eqref{GI}. Then \(\Delta_{kl}=0\); equivalently,
		\(\eta_{k\cdot l}=\eta_{l\cdot k}\), where
		\[
		\eta_k:=\frac13\gamma_k-\frac{1}{n+1}\chi_k.
		\]
	\end{prop}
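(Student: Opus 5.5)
By Lemma \ref{lem-skew}, the three antisymmetrized vertical derivatives in \eqref{DE} all carry the same tensor factor $(x_ly_k-x_ky_l)$. The plan is therefore to reduce $\Delta_{kl}=0$ to a scalar identity between $Q$ and $H$, and then check that identity using the normal form of Lemma \ref{g(r)=0}.

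\textbf{Step 1: reduction to a scalar identity.} Inserting \eqref{chic} and \eqref{gammac} into \eqref{DE} gives
\[
\Delta_{kl}=\frac{1}{|y|}(x_ly_k-x_ky_l)\Bigl[\frac{1}{3}Q(r,s)-\frac{1}{n+1}H(r,s)\Bigr].
\]
So it suffices to prove $\frac13 Q=\frac{1}{n+1}H$ whenever $G$ is projectively affine. The equivalence with $\eta_{k\cdot l}=\eta_{l\cdot k}$ is immediate, since $\Delta_{kl}=\eta_{k\cdot l}-\eta_{l\cdot k}$ by linearity of the vertical derivative.

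\textbf{Step 2: comparing the formulas.} Write
\[
\mathcal{A}:=2\bigl[(r-s^2)A_{ss}-sA_s+A\bigr]B-2sA_{rs}-A_{ss}+2A_r .
\]
From \eqref{h2} we have $\frac13Q=\mathcal{A}$ exactly. From \eqref{h0} we have
\[
\frac{1}{n+1}H=\mathcal{A}+\frac{r-s^2}{n+1}\,K,
\]
where
\[
K:=(2(r-s^2)B-1)B_{sss}-2sB_{rss}-6sBB_{ss}+6BB_s+2B_{rs}
\]
depends only on $B$. Hence the claim reduces to showing $K=0$.

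\textbf{Step 3: verification of $K=0$.} By Lemma \ref{g(r)=0}, $B=f(r)s^2+h(r)$, so
\[
B_s=2fs,\quad B_{ss}=2f,\quad B_{sss}=0,\quad B_{rs}=2f's,\quad B_{rss}=2f'.
\]
Substituting into $K$ gives
\[
K=-4sf'-12sfB+12sfB+4sf'=0,
\]
and therefore $\Delta_{kl}=0$.

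There is no real obstacle here. The only care needed is bookkeeping in Step 2: one must confirm that the $A$-dependent part of $H$ is exactly $(n+1)\mathcal{A}$, which can be read off directly from \eqref{h0}. Note also that $K=0$ holds identically in $(r,s)$, so the factor $(r-s^2)$ in front of $K$ plays no role and no separate treatment of the locus $x\parallel y$ is needed.
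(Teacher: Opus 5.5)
Your proposal is correct and follows the same route as the paper. Using \eqref{chic} and \eqref{gammac}, you reduce $\Delta_{kl}$ to the scalar $\frac13 Q-\frac{1}{n+1}H=-\frac{r-s^2}{n+1}K$, as in \eqref{DD}, and then show that $K$ vanishes once $B=f(r)s^2+h(r)$ from Lemma \ref{g(r)=0} is substituted; your explicit split of $H$ into the $A$-part $(n+1)\mathcal{A}$ and the $B$-part $(r-s^2)K$ just makes the paper's passage to \eqref{DD} more transparent.
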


	\begin{proof}
		By Lemma \ref{g(r)=0}, \(B(r,s)=f(r)s^2+h(r)\).
		Substituting \eqref{chic} and \eqref{gammac} into \eqref{DE} yields
		\begin{align}\label{De}
			\Delta_{kl}
			=\frac{1}{3|y|}(x_ly_k-x_ky_l)Q(r,s)
			-\frac{1}{(n+1)|y|}(x_ly_k-x_ky_l)H(r,s),
		\end{align}
		and substituting \eqref{h0} and \eqref{h2} into \eqref{De} gives
		\begin{align}\label{DD}
			\Delta_{kl}
			=\frac{-(r-s^2)(x_l y_k-x_k y_l)}{(n+1)|y|}
			\Big[(2rB-2s^2B-1)B_{sss}-2sB_{rss}
			-6sBB_{ss}+2B_{rs}+6BB_s\Big].
		\end{align}
		Now \(B(r,s)=f(r)s^2+h(r)\) implies
		\[
		B_s=2fs,\qquad B_{rs}=2f's,\qquad B_{ss}=2f,\qquad B_{rss}=2{f}',\qquad B_{sss}=0,
		\]
		and hence the bracket in \eqref{DD} becomes
		\[
		-4f's-6sB\cdot 2f+4f's+6B\cdot 2fs=-12sfB+12sfB=0.
		\]
	Hence \(\Delta_{kl}=0\). Since
	\(\Delta_{kl}=\eta_{k\cdot l}-\eta_{l\cdot k}\) with
	\(\eta_k=\frac13\gamma_k-\frac{1}{n+1}\chi_k\), this is exactly \(\eta_{k\cdot l}=\eta_{l\cdot k}\).
	\end{proof}

	\subsection{Berwald normal forms}\label{p3.2}
	
Within the class of spherically symmetric projectively affine sprays,
Lemma \ref{g(r)=0} already fixes \(B(r,s)=f(r)s^2+h(r)\), so the
Berwald condition constrains only \(A(r,s)\). The following theorem
gives the corresponding normal form.

	\begin{thm}\label{Thm 2}
		Let \(G\) be a spherically symmetric projectively affine spray on an open subset \(\Omega\subset\mathbb{R}^n\) ($n\geq 2$) with spray coefficients
		given by \eqref{GI}. Then \(G\) is a Berwald spray if and only if
		there exist smooth functions \(a(r),f(r),h(r)\)  such that
		\begin{align}
			A(r,s)=a(r)s,\qquad B(r,s)=f(r)s^2+h(r),\nonumber
		\end{align}
		equivalently,
	$
		G^i = a(r)\langle x,y\rangle y^i + \left(f(r)\langle x,y\rangle^2+h(r)|y|^2 \right)x^i.
	$
	\end{thm}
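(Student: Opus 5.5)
By Lemma \ref{g(r)=0}, projective affineness already gives \(B(r,s)=f(r)s^2+h(r)\). The plan is to show that the Berwald condition forces \(A\) to be linear in \(s\), and that nothing beyond this is needed.

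\emph{Sufficiency.} Suppose \(A=a(r)s\) and \(B=f(r)s^2+h(r)\). Then
\[
G^i=a(r)\langle x,y\rangle y^i+\bigl(f(r)\langle x,y\rangle^2+h(r)|y|^2\bigr)x^i .
\]
Each term is a quadratic polynomial in \(y\) whose coefficients depend only on \(x\), since \(r=|x|^2\). Hence \(G^i\) is quadratic in \(y\), so \(B^i_{jkl}=0\) and \(G\) is Berwald.

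\emph{Necessity.} Assume \(G\) is Berwald with \(B=f(r)s^2+h(r)\). The term \(|y|^2Bx^i=\bigl(f\langle x,y\rangle^2+h|y|^2\bigr)x^i\) is already quadratic in \(y\). Therefore \(P^i:=|y|A(r,s)y^i\) must also be quadratic in \(y\) for each fixed \(x\).

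Contract with \(y_i\) to get \(|y|^3A(r,s)=y_iP^i\). This is a homogeneous cubic polynomial in \(y\), so \(\phi(y):=|y|^3A(r,\langle x,y\rangle/|y|)\) is a cubic polynomial.

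Fix \(x\neq0\) and restrict to \(y=t\,x/|x|+w\) with \(w\perp x\) and \(|w|=\rho\). Then \(s=t\sqrt r/\sqrt{t^2+\rho^2}\). Since \(\phi\) depends only on \(t\) and \(\rho\), and its restriction to the hyperplane \(\{t=\mathrm{const}\}\) is rotationally invariant in \(w\), \(\phi\) must have the form \(\phi=c_1t^3+c_2t|y|^2\), where \(|y|^2=t^2+\rho^2\). This uses \(n\ge2\); when \(n=2\), \(w\) ranges over a line, but evenness in \(w\) gives the same conclusion.

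Dividing by \(|y|^3\) gives
\[
A=c_1\,\frac{s^3}{r^{3/2}}+c_2\,\frac{s}{\sqrt r},
\]
with \(c_1,c_2\) depending on \(r\).

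A cleaner route is to use \(P^i\) itself: \(P^i=|y|A\,y^i\) quadratic forces \(|y|A\) to be linear in \(y\), because a quadratic map of the form \(\psi(y)y^i\) has \(\psi\) a linear form. Indeed, \(\psi=P^1/y^1\) is a degree-one homogeneous rational function, and the identity \(P^iy^j=P^jy^i\) between polynomials shows that \(y^1\) divides \(P^1\).

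So \(|y|A(r,s)=\lambda_x(y)\) is a linear functional in \(y\). By \(O(n)\)-invariance \(\lambda_x\) is invariant under the stabilizer of \(x\), so \(\lambda_x(y)=a(r)\langle x,y\rangle\) whenever \(n\ge2\). Hence \(A=a(r)s\).

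\emph{Main obstacle.} I expect the main obstacle to be rigor in two places. The first is the divisibility argument for \(P^i=\psi y^i\). The second is extracting smooth functions \(a(r)\), \(f(r)\), \(h(r)\) near \(x=0\), where the stabilizer argument degenerates. At \(x=0\) continuity handles it, as in the proof of Lemma \ref{g(r)=0}.
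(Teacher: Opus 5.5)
Your proposal is correct and follows the paper's proof. Lemma~\ref{g(r)=0} reduces the Berwald condition to requiring that $|y|A\,y^i$ be quadratic in $y$; this forces $|y|A$ to be a linear form $\lambda_x(y)$, and invariance under the stabilizer of $x$ in $O(n)$ then gives $\lambda_x(y)=a(r)\langle x,y\rangle$. Your divisibility argument via $P^iy^j=P^jy^i$ justifies a step the paper only asserts, while the preliminary cubic-polynomial detour is unnecessary: on its own it still leaves the $s^3/r^{3/2}$ term to be eliminated.
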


\begin{proof}
	By Lemma \ref{g(r)=0},
	\(G^i=|y|A(r,s)y^i+[f(r)\langle x,y\rangle^2+h(r)|y|^2]x^i\), and the second
	term is already quadratic in \(y\). Hence \(G\) is a Berwald spray if and only
	if \(P(x,y)y^i\) is quadratic in \(y\) for every \(i\), where
	\(P:=|y|A(r,s)\).

	Suppose \(G\) is a Berwald spray. Since \(P(x,y)y^i\) is homogeneous
	quadratic in \(y\) for each \(i\) and \(n\ge2\), \(P\) is homogeneous
	linear, i.e., \(P(x,y)=c_j(x)y^j\). Spherical symmetry
	\(P(Qx,Qy)=P(x,y)\) forces \(c(x)\) to be invariant under the
	stabilizer of \(x\) in \(O(n)\); hence
	\(c_j(x)=a(r)x_j\). Therefore \(P=a(r)\langle x,y\rangle\), and
	\(A=a(r)s\).

	Conversely, if \(A=a(r)s\), then
	\(G^i=a(r)\langle x,y\rangle y^i+[f(r)\langle x,y\rangle^2+h(r)|y|^2]x^i\)
	is quadratic in \(y\), so \(G\) is Berwald.
\end{proof}

	\section{\textbf{Vanishing of rational Pontrjagin classes}}\label{p4}
	
	\subsection{The associated spray and the algebraic vanishing
		lemma}\label{p4.1}

In this subsection we introduce the associated spray and the algebraic
tools for the proof of Theorem~\ref{The 1}; the geometric input
\(\Delta_{kl}=0\) from Proposition~\ref{corD} will be used in
Section~\ref{p4.2}.

Let \(G\) be a spray and \(dv\) a volume form on an \(n\)-manifold \(M\).
The spray \emph{associated with} \((G,dv)\) is
\begin{align}
	\hat{G}^i=G^i-\frac{S}{n+1}y^i, \label{1}
\end{align}
where \(S\) is the \(S\)-curvature of \((G,dv)\); we say that \(\hat{G}\)
and \(G\) are \emph{projectively equivalent}. The following result is
due to Shen \cite{ref2}.

\begin{lem}[\cite{ref2}]\label{lem 1}
	If \(\hat{G}\) and \(G\) are projectively equivalent with \eqref{1},
	then
	\[
	\hat{B}_{jkl}^{i}=D_{jkl}^{i},
	\]
	and the Riemann curvature of \(\hat{G}\) is
	\begin{align}
		\hat{R}^i_{\;k}=R^i_{\;k}+\tau \delta^i_k-\frac{1}{2}\tau_{\cdot k}y^i+\frac{3\chi_k}{n+1}y^i,
		\qquad
		\tau:=\Bigl(\frac{S}{n+1}\Bigr)^2+\frac{1}{n+1}S_{|m}y^m.\label{2}
	\end{align}
\end{lem}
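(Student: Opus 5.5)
The plan is to derive both formulas from the general projective-change formulas with projective factor \(P:=-\frac{S}{n+1}\), so that \(\hat G^i=G^i+Py^i\). I will use three facts. First, \(S=\Pi-y^m\partial_{x^m}\ln\sigma\) with \(\Pi=\partial G^m/\partial y^m\). Second, the term \(y^m\partial_{x^m}\ln\sigma\) is linear in \(y\), so it disappears after two vertical derivatives. Third, \(\Pi_{\cdot j\cdot k}=B^m_{mjk}=2E_{jk}\).

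\emph{Berwald curvature.} Differentiating \(\hat G^i=G^i-\frac{1}{n+1}Sy^i\) three times in \(y\) gives
\[
\hat B^i_{jkl}=B^i_{jkl}-\frac{1}{n+1}\bigl[S_{\cdot j\cdot k}\delta^i_l+S_{\cdot j\cdot l}\delta^i_k+S_{\cdot k\cdot l}\delta^i_j+S_{\cdot j\cdot k\cdot l}\,y^i\bigr].
\]
Since \(S_{\cdot j\cdot k}=\Pi_{\cdot j\cdot k}=2E_{jk}\), and hence \(S_{\cdot j\cdot k\cdot l}=2E_{jk\cdot l}\), the bracket is exactly twice the bracket in \eqref{Dou}. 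Therefore \(\hat B^i_{jkl}=D^i_{jkl}\).

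\emph{Riemann curvature.} I will quote (or rederive from the formula for \(R^i_{\;k}\) in Section~\ref{p2}) the classical transformation rule under \(\hat G^i=G^i+Py^i\):
\[
\hat R^i_{\;k}=R^i_{\;k}+\Xi\,\delta^i_k+\tau_k y^i,\qquad \Xi=P^2-P_{|m}y^m,\qquad \tau_k=3(P_{|k}-PP_{\cdot k})+\Xi_{\cdot k}.
\]
Substituting \(P=-S/(n+1)\) gives \(\Xi=\tau\), with \(\tau\) as in \eqref{2}. It remains to show
\[
\tau_k=-\tfrac12\tau_{\cdot k}+\tfrac{3}{n+1}\chi_k .
\]

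For this I need the commutation identity \((S_{|m}y^m)_{\cdot k}=S_{\cdot k|m}y^m+S_{|k}\). It follows from \([\partial_{y^k},\delta_{x^m}]=-N^j_{m\cdot k}\partial_{y^j}\) together with \(N^j_m=\Gamma^j_{mk}y^k\) (Euler's relation), so that \(S_{|m}y^m\) is differentiated consistently. From it,
\[
\tau_{\cdot k}=\frac{2SS_{\cdot k}}{(n+1)^2}+\frac{S_{\cdot k|m}y^m+S_{|k}}{n+1}.
\]
Then
\[
\tau_k+\tfrac12\tau_{\cdot k}=\tfrac32\tau_{\cdot k}-\frac{3S_{|k}}{n+1}-\frac{3SS_{\cdot k}}{(n+1)^2}=\frac{3}{n+1}\cdot\frac12\bigl(S_{\cdot k|m}y^m-S_{|k}\bigr)=\frac{3\chi_k}{n+1},
\]
which is \eqref{2}.

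The main obstacle is establishing the projective-change formula for \(\hat R^i_{\;k}\) and the commutation identity with correct signs. I would check both by direct substitution into the coordinate formula for \(R^i_{\;k}\), using the homogeneity of \(P\).
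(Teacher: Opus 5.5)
Your proof is correct. The paper gives no proof of this lemma; it is quoted from Shen. Your route is the standard derivation:
\begin{itemize}
\item the general projective-change formulas with $P=-S/(n+1)$;
\item $S_{\cdot j\cdot k}=\Pi_{\cdot j\cdot k}=B^m_{mjk}=2E_{jk}$ for the Berwald part;
\item $\Xi=\tau$ and $\tau_k=-\tfrac12\tau_{\cdot k}+\tfrac{3}{n+1}\chi_k$ for the Riemann part.
\end{itemize}
Substituting into the coordinate formula for $R^i_{\;k}$ confirms the transformation rule $\hat R^i_{\;k}=R^i_{\;k}+\Xi\delta^i_k+\tau_ky^i$ with $\tau_k=3(P_{|k}-PP_{\cdot k})+\Xi_{\cdot k}$, and your final algebra also checks out.

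You should state explicitly the convention for $|$ in your commutation identity, since its correctness depends on it. From $[\partial_{y^k},\delta/\delta x^m]=-N^j_{m\cdot k}\partial_{y^j}$ and $y^mN^j_{m\cdot k}=N^j_k$ one gets
\[
(S_{|m}y^m)_{\cdot k}=S_{|k}+y^m\frac{\delta S_{\cdot k}}{\delta x^m}-N^j_kS_{\cdot j}.
\]
So $(S_{|m}y^m)_{\cdot k}=S_{\cdot k|m}y^m+S_{|k}$ holds exactly when $S_{\cdot k|m}$ is the Berwald covariant derivative of the covector $S_{\cdot k}$, namely $\frac{\delta S_{\cdot k}}{\delta x^m}-\Gamma^j_{km}S_{\cdot j}$. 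Under the componentwise shorthand $A_{|j}=\delta A/\delta x^j$ announced in Section~\ref{p2}, the identity would be off by $-N^j_kS_{\cdot j}$.

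The covariant reading is also the only one under which the definition $\chi_k=\frac12(S_{\cdot k|m}y^m-S_{|k})$ agrees with the coordinate formula \eqref{chi}. Under the componentwise reading the two would differ by $\frac12N^j_kS_{\cdot j}$, which even depends on $dv$. So your usage is the right one, but it should be said.
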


\begin{rem}\label{rem-dv}
	The spray \(\hat G\) depends on \(dv\) through \(S\). The proof of
	Theorem~\ref{The 1} uses only two facts: \(\hat G\) is Berwald
	whenever \(G\) is projectively affine (Lemma~\ref{lem 1}), so
	\(\hat\Omega\) is the curvature of a linear connection on \(M\); and
	in a local spherical chart the terms involving \(\tau\) enter the
	curvature decomposition only through the second vertical derivative
	\(\tau_{\cdot j\cdot l}\), which is symmetric. Hence the decomposition
	\(\hat\Omega=\mathcal S+\mathcal T\) below, and the vanishing of
	\(\sigma_{2k}(\hat\Omega)\), do not depend on the choice of \(dv\).
\end{rem}

Assume \(D_{jkl}^{i}=0\). By Lemma \ref{lem 1}, \(\hat{B}_{jkl}^{i}=0\),
so the Berwald connection of \(\hat{G}\) is a torsion-free linear
connection on \(M\), with
$
\hat{G}^i=\tfrac12\hat{\Gamma}^{i}_{jk}(x)y^jy^k,\,
\hat{\Gamma}^{i}_{jk}=\hat{\Gamma}^{i}_{kj}.
$
Its curvature forms are ordinary differential forms on \(M\):
\begin{align}
	\hat{\Omega}^i_{j}=\frac{1}{2}\hat{R}_{j\;kl}^{\;i}(x)dx^k\wedge dx^l.\label{3}
\end{align}
By Chern--Weil theory \cite{ref3},
\begin{align}
	p_k=\frac{1}{(2\pi)^{2k}}[\sigma_{2k}(\hat{\Omega})],\label{4}
\end{align}
where \(\sigma_{2k}(\hat{\Omega})\) is the \(2k\)-th elementary
symmetric polynomial of the curvature matrix \(\hat{\Omega}\), i.e.,
	\begin{align}
		\sigma_{2k}(\hat{\Omega})&=\nonumber \underset{i_1<i_2<\cdots<i_{2k},\sigma \in S_{2k}}{\Sigma}sgn(\sigma)\hat{\Omega}^{i_{\sigma(1)}}_{i_1}\wedge \cdots \wedge \hat{\Omega}^{i_{\sigma(2k)}}_{i_{2k}}\\
		&=\frac{1}{2^{2k}}\underset{i_1<i_2<\cdots<i_{2k},\sigma \in S_{2k}}{\Sigma}sgn(\sigma)\hat{R}_{i_1m_1l_1}^{i_{\sigma(1)}} \cdots \hat{R}_{i_{2k}m_{2k}l_{2k}}^{i_{\sigma(2k)}}dx^{m_1}\wedge dx^{l_1}\cdots dx^{m_{2k}}\wedge dx^{l_{2k}},\nonumber
	\end{align}
	where \(S_{2k}\) is the symmetric group on \(2k\) elements.

Since \(\sigma_{2k}\) will be used repeatedly, we regard it below as an
algebraic operation on matrix-valued \(2\)-forms: for any \(n\times n\)
matrix \(\tilde{\Omega}=(\tilde{\Omega}^i_j)\) of \(2\)-forms, not
necessarily a curvature matrix, \(\sigma_{2k}(\tilde{\Omega})\) denotes
the \(2k\)-th elementary symmetric polynomial with wedge products.

The Pontrjagin classes of Douglas sprays were computed in \cite{ref1}.

\begin{lem}[\cite{ref1}]
	For a Douglas spray \(G\) on \(M\), the Pontrjagin classes \(p_k\)
	of \(M\) take the form
	\[
	p_k=\frac{1}{(4\pi)^{2k}(2k)!}\sum_{i_1,\dots,i_{2k},\sigma\in S_{2k}}
	\operatorname{sgn}(\sigma)\prod_{s=1}^{2k}\hat{R}^{\;i_{\sigma(s)}}_{i_s\; m_s l_s}
	\bigwedge_{s=1}^{2k}dx^{m_s}\wedge dx^{l_s},
	\]
	where \(\hat{R}\) is the Riemann curvature of the associated spray
	\(\hat{G}\).
\end{lem}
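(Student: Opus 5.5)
The plan is to reduce the formula to the Chern--Weil representative \eqref{4} and then rewrite the elementary symmetric polynomial combinatorially. First, since $G$ is a Douglas spray ($D^i_{jkl}=0$), Lemma \ref{lem 1} gives $\hat B^i_{jkl}=D^i_{jkl}=0$. Hence the associated spray $\hat G$ is Berwald, with $\hat G^i=\tfrac12\hat\Gamma^i_{jk}(x)y^jy^k$. The coefficients $\hat\Gamma^i_{jk}$ are symmetric in $j,k$, and under a change of coordinates they transform like the Christoffel symbols of an affine connection. This is because $\hat G$ is a spray on $M$: the spray transformation law for quadratic $\hat G^i$ is exactly the connection transformation law.

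So $\hat\Gamma$ defines a torsion-free linear connection on $TM$. Its curvature matrix is the matrix of ordinary $2$-forms \eqref{3}. Chern--Weil theory then applies to the real vector bundle $TM$ and gives \eqref{4}:
\[
p_k=\frac{1}{(2\pi)^{2k}}[\sigma_{2k}(\hat\Omega)].
\]
The class does not depend on the chosen connection, so it does not depend on $dv$ either.

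Second, I expand $\sigma_{2k}(\hat\Omega)$ using the formula displayed after \eqref{4}. Substituting $\hat\Omega^i_j=\tfrac12\hat R^{\;i}_{j\;ml}dx^m\wedge dx^l$ produces the factor $2^{-2k}$. Combined with $(2\pi)^{-2k}$ this gives $(4\pi)^{-2k}$.

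The remaining step, which is the only one needing care, is to replace the sum over increasing multi-indices $i_1<\dots<i_{2k}$ by the sum over all $i_1,\dots,i_{2k}$ at the cost of the factor $1/(2k)!$. I would prove two facts.

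\textbf{(a) Repeated indices contribute zero.} Suppose $i_a=i_b$ with $a\neq b$, and let $\tau=(a\,b)$. The map $\sigma\mapsto\tau\sigma$ is a bijection of $S_{2k}$ that reverses the sign. It leaves the product $\prod_s\hat\Omega^{i_{\sigma(s)}}_{i_s}$ unchanged as a multiset of factors, and the factors are $2$-forms, so they commute under the wedge product. Hence the terms cancel in pairs.

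\textbf{(b) The signed sum is invariant under relabelling.} For distinct indices and $\pi\in S_{2k}$, the sum $\sum_\sigma\operatorname{sgn}(\sigma)\prod_s\hat\Omega^{i_{\sigma(s)}}_{i_s}$ is unchanged if we replace $(i_1,\dots,i_{2k})$ by $(i_{\pi(1)},\dots,i_{\pi(2k)})$. To see this, reindex $\sigma\mapsto\pi^{-1}\sigma\pi$, which preserves the sign, and reorder the commuting $2$-form factors.

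Each unordered set of distinct indices therefore appears $(2k)!$ times in the full sum, and the claimed formula follows.

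I expect no real obstacle beyond checking the sign bookkeeping in (b) and noting that even-degree forms commute. The geometric input, that $\hat\Omega$ is a genuine connection curvature on $M$, is supplied entirely by Lemma \ref{lem 1}.
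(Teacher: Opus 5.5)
Your proposal is correct and follows the chain the paper sets up around this citation: Lemma~\ref{lem 1} makes $\hat G$ Berwald, so \eqref{3} is the curvature of a torsion-free linear connection on $TM$; Chern--Weil then gives \eqref{4}, and the displayed expansion of $\sigma_{2k}(\hat\Omega)$ supplies the factor $2^{-2k}$. Your two combinatorial steps, pairwise cancellation for repeated indices and conjugation invariance under relabelling, correctly justify passing from the sum over $i_1<\dots<i_{2k}$ to the unrestricted sum with the factor $1/(2k)!$, a step the paper leaves implicit by simply citing \cite{ref1}.
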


For a matrix \(\mathcal{K}=(\mathcal{K}^i{}_j)\) of even-degree
differential forms, define
\[
\det(I+t\mathcal{K})=\sum_{k=0}^{n}\sigma_k(\mathcal{K})t^k,
\]
where \(\sigma_k(\mathcal{K})\) is the \(k\)-th elementary symmetric
expression of \(\mathcal{K}\), with wedge products. We shall use the
classical Newton identities; see Macdonald \cite[Chapter~I, \S2,
p.~23, Eq.~(2.11')]{Macdonald}.

\begin{lem}[\cite{Macdonald}]\label{lem-newton}
Let \(\mathcal{A}\) be a commutative \(\mathbb{R}\)-algebra and let
	\(\mathcal{K}\in\operatorname{Mat}_n(\mathcal{A})\) (the algebra of
	\(n\times n\) matrices over \(\mathcal{A}\)).
Write
	\(\det(I+t\mathcal{K})=\sum_{k=0}^{n}\sigma_k(\mathcal{K})t^k\)
	with \(\sigma_0(\mathcal{K})=1\). Then for \(1\leq k\leq n\),
	\[
	k\,\sigma_k(\mathcal{K})=\sum_{i=1}^{k}(-1)^{i-1}\sigma_{k-i}(\mathcal{K})\operatorname{tr}(\mathcal{K}^i).
	\]
\end{lem}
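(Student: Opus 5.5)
The plan is to prove the identity directly over $\mathcal{A}$, using formal power series in $t$ and Jacobi's formula for the derivative of a determinant. This avoids eigenvalues, so the argument works in any commutative $\mathbb{R}$-algebra, in particular the algebra of even-degree forms to which the lemma will be applied. Set $E(t):=\det(I+t\mathcal{K})\in\mathcal{A}[t]$. Since its constant term is $1$, $E(t)$ is invertible in $\mathcal{A}[[t]]$. Likewise $I+t\mathcal{K}$ is invertible in $\operatorname{Mat}_n(\mathcal{A}[[t]])$, with
\[
(I+t\mathcal{K})^{-1}=\sum_{j\ge0}(-1)^jt^j\mathcal{K}^j.
\]

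The first step is Jacobi's formula
\[
E'(t)=\operatorname{tr}\bigl(\operatorname{adj}(I+t\mathcal{K})\,\mathcal{K}\bigr).
\]
I would derive it by expanding the determinant multilinearly in the columns: the derivative of the $j$-th column is the $j$-th column of $\mathcal{K}$, and expanding along that column produces cofactors. This is a purely formal identity over any commutative ring. Using $\operatorname{adj}(M)=\det(M)M^{-1}$ for invertible $M$ over $\mathcal{A}[[t]]$, it becomes
\[
E'(t)=E(t)\,\operatorname{tr}\bigl((I+t\mathcal{K})^{-1}\mathcal{K}\bigr)=E(t)\sum_{i\ge1}(-1)^{i-1}t^{i-1}\operatorname{tr}(\mathcal{K}^i).
\]

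The second step is to compare coefficients of $t^{k-1}$ for $1\le k\le n$. On the left this coefficient is $k\sigma_k(\mathcal{K})$. On the right, multiplying $\sum_{m}\sigma_m t^m$ by the series gives
\[
\sum_{i=1}^{k}(-1)^{i-1}\sigma_{k-i}(\mathcal{K})\operatorname{tr}(\mathcal{K}^i),
\]
which is exactly the asserted identity.

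The only point needing care is the algebraic setting: commutativity of $\mathcal{A}$ is used both in Jacobi's formula and in $\operatorname{adj}(M)M=\det(M)I$. For the intended application this holds, since even-degree forms commute under the wedge product. If one prefers to avoid Jacobi's formula, there is a fallback. The identity is polynomial with integer coefficients in the $n^2$ entries, so it suffices to check it for a generic matrix over $\mathbb{Z}[k_{ij}]$. Over the algebraic closure of its fraction field that matrix is triangularizable, with eigenvalues $\lambda_1,\dots,\lambda_n$, so $\sigma_k(\mathcal{K})=e_k(\lambda)$ and $\operatorname{tr}(\mathcal{K}^i)=\sum_j\lambda_j^i$. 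The claim then reduces to the classical logarithmic-derivative computation
\[
\frac{E'(t)}{E(t)}=\sum_{j}\frac{\lambda_j}{1+\lambda_j t}.
\]
I expect no step to be a genuine obstacle.
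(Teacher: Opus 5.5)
Your proof is correct. It takes a different route from the paper's, which gives no argument here and only cites Macdonald, Ch.~I, Eq.~(2.11'). That equation is Newton's identity $k e_k=\sum_{i=1}^k(-1)^{i-1}e_{k-i}p_i$ for elementary and power-sum symmetric functions, proved via the logarithmic derivative of $E(t)=\prod_j(1+x_jt)$.

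Read literally, the citation covers the matrix statement only after a transfer step. One has to identify $\sigma_k(\mathcal K)$ and $\operatorname{tr}(\mathcal K^i)$ with $e_k$ and $p_i$ evaluated at eigenvalues, and that makes no sense for a matrix of $2$-forms, whose entries are all nilpotent. So the citation implicitly relies on exactly your fallback: check the identity for the generic matrix over $\mathbb{Z}[k_{ij}]$, triangularize over $\overline{\mathbb{Q}(k_{ij})}$, then specialize $k_{ij}\mapsto\mathcal K^i_j$.

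Your main route through Jacobi's formula in $\operatorname{Mat}_n(\mathcal A[[t]])$ avoids eigenvalues and specialization altogether. It uses only $\operatorname{adj}(M)M=\det(M)I$, the geometric series for $(I+t\mathcal K)^{-1}$, and comparison of coefficients. It therefore proves the identity directly over any commutative ring, with no need for $\mathbb{R}$ or characteristic zero, which is precisely the generality required for $\Omega^{\mathrm{even}}$ in Lemma~\ref{lem2}.

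The citation route is shorter on the page and leans on a standard reference. Yours is self-contained and makes explicit the one hypothesis that matters in the application, namely commutativity of $\mathcal A$.
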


We apply the Newton identities to a special class of matrix-valued
\(2\)-forms; the resulting algebraic lemma is the core of the proof of
Theorem \ref{The 1}.

\begin{lem}\label{lem2}
	Let \(\mathcal S=(\mathcal S^i_j)\) and \(\mathcal T=(\mathcal T^i_j)\)
	be matrices of \(2\)-forms with
	\[
	\mathcal S^i_j= A_{jl}\,dx^i\wedge dx^l,
	\qquad A_{jl}=A_{lj},
	\]
	and
	\[
	\mathcal T^i_j = x^i\left(\tilde{H} \,dx^j+\tilde{H} _{\cdot j}\theta\right)\wedge\omega,
	\]
	where \(\tilde{H}=\tilde{H}(r,s)\) is smooth,
	\(\tilde{H}_{\cdot j}:=\partial\tilde{H}/\partial y^j\), and
	\begin{align}
		\omega= x_l\,dx^l,\quad \theta= y_l\,dx^l.\label{ot}
	\end{align}
	Set \(\mathcal K=\mathcal S+\mathcal T\). Then \(\sigma_k(\mathcal K)=0\)
	for all \(k\ge 2\);
in particular, for all \(m\ge 1\),
		\begin{align}
			\sigma_{2m}(\mathcal K)=0.\label{sig}
		\end{align}
\end{lem}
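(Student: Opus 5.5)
The plan is to use the Newton identities of Lemma \ref{lem-newton}. The entries of \(\mathcal K\) are \(2\)-forms, so they lie in the commutative algebra of even forms. It therefore suffices to compute \(\operatorname{tr}(\mathcal K)\) and \(\operatorname{tr}(\mathcal K^m)\) for \(m\ge2\). I will show that \(\operatorname{tr}(\mathcal K^m)=0\) for all \(m\ge 2\), and that \(\operatorname{tr}(\mathcal K)\) is a multiple of \(\theta\wedge\omega\), so that \((\operatorname{tr}\mathcal K)^2=0\). Newton's identities then reduce to \(k\sigma_k=\sigma_{k-1}\operatorname{tr}\mathcal K\). By induction this gives \(\sigma_k=(\operatorname{tr}\mathcal K)^k/k!\), which vanishes for \(k\ge2\).

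\emph{Step 1: the entries.} Write \(\mathcal S^i_j=dx^i\wedge\alpha_j\) with \(\alpha_j=A_{jl}dx^l\), and \(\mathcal T^i_j=x^i\beta_j\wedge\omega\) with \(\beta_j=\tilde H dx^j+\tilde H_{\cdot j}\theta\). The one computation that uses the special form of \(\tilde H\) is
\[
\tilde H_{\cdot i}dx^i=\tilde H_s s_{\cdot i}dx^i=\frac{\tilde H_s}{|y|^2}\bigl(|y|\omega-s\theta\bigr),
\]
which follows from \eqref{s}. Hence any expression containing \(\tilde H_{\cdot i}dx^i\wedge\theta\wedge\omega\) vanishes.

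\emph{Step 2: the basic products.} There are three facts to check.
\begin{enumerate}[(a)]
\item \(\mathcal S^2=0\), because \(\alpha_j\wedge dx^j=A_{jl}dx^l\wedge dx^j=0\) by the symmetry of \(A\).
\item \(\mathcal T^2=0\), because each entry of \(\mathcal T^2\) contains \(\omega\wedge\omega\).
\item \(\operatorname{tr}\mathcal S=A_{il}dx^i\wedge dx^l=0\), and \(\operatorname{tr}\mathcal T=x^i\tilde H_{\cdot i}\,\theta\wedge\omega\), since \(x^idx^i\wedge\omega=\omega\wedge\omega=0\).
\end{enumerate}
In particular \(\operatorname{tr}\mathcal K=c\,\theta\wedge\omega\) for some function \(c\), so \((\operatorname{tr}\mathcal K)^2=0\).

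\emph{Step 3: higher traces.} Expand \(\operatorname{tr}(\mathcal K^m)\) as a sum of traces of words in \(\mathcal S\) and \(\mathcal T\). Any word with two adjacent equal letters vanishes by (a) and (b). Since the trace of a product of \(2\)-form matrices is cyclic, this also covers letters that are adjacent cyclically. For \(m\ge2\), the only surviving words therefore have the alternating form \((\mathcal S\mathcal T)^p\) up to cyclic rotation, with \(m=2p\). If \(p\ge2\), the factor \(\omega\) occurs at least twice, so the word vanishes. The remaining case is \(p=1\), and there
\[
\operatorname{tr}(\mathcal S\mathcal T)=x^jA_{jl}\,dx^i\wedge dx^l\wedge\beta_i\wedge\omega
=x^jA_{jl}\,dx^l\wedge\bigl(\tilde H_{\cdot i}dx^i\bigr)\wedge\theta\wedge\omega\cdot(\pm1).
\]
The \(\tilde H\,dx^i\wedge dx^i\) part drops out, and what is left vanishes by Step 1. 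The same holds for \(\operatorname{tr}(\mathcal T\mathcal S)\). So \(\operatorname{tr}(\mathcal K^m)=0\) for every \(m\ge2\).

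I expect the main obstacle to be this last computation. It is the only place where the specific dependence \(\tilde H=\tilde H(r,s)\) enters, and it requires care with the cyclic-word bookkeeping and the signs of the \(2\)-form wedge products. Once it is done, the induction through Lemma \ref{lem-newton} is immediate and gives \(\sigma_k(\mathcal K)=0\) for all \(k\ge2\), and in particular \eqref{sig}.
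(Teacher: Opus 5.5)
Your proposal is correct and follows essentially the same route as the paper. Both arguments use $\mathcal S^2=0$ from the symmetry of $A$, $\mathcal T^2=0$ from $\omega\wedge\omega=0$, the cross term killed by $\tilde H_{\cdot i}dx^i=\frac{\tilde H_s}{|y|^2}(|y|\omega-s\theta)$, $\operatorname{tr}\mathcal K$ being a multiple of $\theta\wedge\omega$, and Newton's identities. The only difference is bookkeeping: the paper proves the matrix identity $\mathcal T\mathcal S=0$, so $\mathcal K^2=\mathcal S\mathcal T$ and $\mathcal K^3=0$ outright, while you treat the higher traces with the cyclic-word argument, which is equally valid because the entries are commuting $2$-forms.
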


\begin{proof}
Under the Einstein summation convention, the \((i,k)\)-th entry of
	\(\mathcal S^2\) is
	\[
	(\mathcal S^2)^i_k = \mathcal S^i_j \wedge \mathcal S^j_k
	= (A_{jl}\,dx^i\wedge dx^l) \wedge (A_{kp}\,dx^j\wedge dx^p)
	= A_{jl}A_{kp} \, dx^i\wedge dx^l\wedge dx^j\wedge dx^p.
	\]
	Since \(A_{jl}=A_{lj}\) is symmetric in the indices \(j,l\), while the
	\(2\)-form \(dx^l\wedge dx^j = -dx^j\wedge dx^l\) is skew-symmetric,
	their contraction vanishes identically. Therefore, \(\mathcal S^2=0\).

Next, since every entry of \(\mathcal T\) contains the common factor \(\omega\),
	we have \(\mathcal T^i_j\wedge \mathcal T^p_q=0\) for all indices,
	which implies \(\mathcal T^2=0\).

	For $\mathcal T\mathcal S$, substituting the expressions of $\mathcal T$
	and $\mathcal S$ gives
	\[
	(\mathcal T\mathcal S)^i_k = \mathcal T^i_j\wedge \mathcal S^j_k
	=  x^iA_{kp}\,(\tilde H\,dx^j+\tilde H_{\cdot j}\theta)
	\wedge\omega\wedge dx^j\wedge dx^p.
	\]
	The $\tilde H$-term vanishes since $dx^j\wedge\omega\wedge dx^j=0$. For
	the $\tilde H_{\cdot j}$-term, use
	$\tilde H_{\cdot j}=\frac{\tilde H_s}{|y|^2}(|y|x_j-sy_j)$ and
	$ (|y|x_j-sy_j)\,dx^j=|y|\omega-s\theta$: the term equals
	\[
	  \frac{\tilde H_s}{|y|^2}x^iA_{kp}
	\bigl(|y|\omega-s\theta\bigr)\wedge\theta\wedge\omega\wedge dx^p=0,
	\]
	because $\omega\wedge\theta\wedge\omega=0$ and $\theta\wedge\theta=0$.
	Hence $\mathcal T\mathcal S=0$.

Thus $\mathcal S^2= \mathcal T^2= \mathcal T\mathcal S=0,$ and hence
		\[
		{\mathcal K}^2=(\mathcal S+\mathcal T)^2=\mathcal S^2+\mathcal S\mathcal T+\mathcal T\mathcal S+\mathcal T^2=\mathcal S\mathcal T.
		\]
Since
	$\operatorname{tr}(\mathcal S)=  A_{il}dx^i\wedge dx^l=0$
	and $\operatorname{tr}(\mathcal S\mathcal T)=\operatorname{tr}(\mathcal T\mathcal S)$
	by the cyclic property for matrix-valued $2$-forms, we get
	$\operatorname{tr}(\mathcal K^2)=0$. Also
	$\mathcal K^3=\mathcal S(\mathcal T\mathcal S)+\mathcal S\mathcal T^2=0$,
so for all integer $k\ge2$,
		\[
		\operatorname{tr}({\mathcal{K}}^k)=0.
		\]

	Now $\operatorname{tr}(\mathcal K)=\operatorname{tr}(\mathcal S) +\operatorname{tr}(\mathcal T) = \operatorname{tr}(\mathcal T)
	=\sum_ix^i\tilde H_{\cdot i}\theta\wedge\omega$, hence
	$\operatorname{tr}(\mathcal K)\wedge\operatorname{tr}(\mathcal K)=0$.
	Apply Newton's identity (Lemma~\ref{lem-newton}) in $\operatorname{Mat}_n(\Omega^{\mathrm{even}}(N))$:
\[
		2\sigma_2(\mathcal{K})=\operatorname{tr}(\mathcal{K})\wedge\operatorname{tr}(\mathcal{K})-\operatorname{tr}(\mathcal{K}^2)=0.
		\]
		So \(\sigma_2(\mathcal{K})=0\). Suppose inductively that \vskip -5mm
		\[
		\sigma_i(\mathcal{K})=0,
		\qquad
		2\leq i\leq k-1.
		\]
Then Newton's identity gives
	$k\,\sigma_k(\mathcal K)= \sum_{i=1}^{k}(-1)^{i-1}\sigma_{k-i}(\mathcal{K})\operatorname{tr}(\mathcal{K}^i) = \sigma_{k-1}(\mathcal K)\operatorname{tr}(\mathcal K)=0$.
	Thus $\sigma_k(\mathcal K)=0$ for all $k\ge2$; in particular,
	$\sigma_{2m}(\mathcal K)=0$ for $m\ge1$.
\end{proof}

	\begin{rem}\label{rem-sign-T}
		The overall sign of \(\mathcal T\) is irrelevant for the conclusion of
		Lemma \ref{lem2}.  In the application below, combining \eqref{ST} with the subsequent calculations, we obtain \eqref{T1}, i.e.,
		\[
		\mathcal T_j^i=-x^i\big(P(r,s)\,dx^j+P_{\cdot j}(r,s)\,\theta\big)\wedge\omega,
		\]
		so that Lemma \ref{lem2} applies with \(\tilde H=-P\).
	\end{rem}

\subsection{Proof of the vanishing theorem}\label{p4.2}
	
We now prove Theorem \ref{The 1}: by Proposition \ref{corD} the
projectively affine condition gives \(\eta_{k\cdot l}=\eta_{l\cdot k}\),
which makes the curvature matrix of the associated spray decompose as
required by Lemma \ref{lem2}.

\begin{proof}[Proof of Theorem \ref{The 1}]
	Let \(G\) be a locally spherically symmetric projectively affine spray
	on \(M\). Fix a local spherical chart \(U\subset M\); all quantities
	below are restricted to \(U\) and we omit the dependence on \(U\).

	By \eqref{sym}, the Riemann curvature has the form
	$R^i_{\;k}=R\delta^i_k-\iota_kx^i-\gamma_ky^i$, where $\gamma_ky^k=R$, $\iota_ky^k=0$.
	Since $R^{\;i}_{j\;kl}=\frac13(R^i_{\;k\cdot l\cdot j}-R^i_{\;l\cdot k\cdot j})$
	by \eqref{5}, we have
	\begin{align}
		R^{\;i}_{j\;kl}
		&=\frac13\Bigl[(\iota_{l\cdot k\cdot j}-\iota_{k\cdot l\cdot j})x^i
		+(\gamma_{l\cdot k\cdot j}-\gamma_{k\cdot l\cdot j})y^i
		+(\gamma_{l\cdot j}\delta^i_k-\gamma_{k\cdot j}\delta^i_l) \notag\\
		&\qquad\quad+(\gamma_{l\cdot k}-\gamma_{k\cdot l})\delta^i_j
		+(R_{\cdot l\cdot j}\delta^i_k-R_{\cdot k\cdot j}\delta^i_l)\Bigr].\label{Rjk}
	\end{align}
A straightforward calculation of equation (\ref{2}) yields
	\begin{align}\label{hatR}
		\hat{R}_{j\;kl}^{\;i}
		&=R_{j\;kl}^{\;i}+\frac12(\tau_{\cdot j\cdot l}\delta^i_k-\tau_{\cdot j\cdot k}\delta^i_l)
		+\frac1{n+1}\Bigl[(\chi_{k\cdot l\cdot j}-\chi_{l\cdot k\cdot j})y^i \notag\\
		&\qquad+(\chi_{k\cdot j}\delta^i_l-\chi_{l\cdot j}\delta^i_k)
		+(\chi_{k\cdot l}-\chi_{l\cdot k})\delta^i_j\Bigr],
	\end{align}
	and collecting the $y^i$- and $\delta^i_j$-terms of \eqref{Rjk} and
	\eqref{hatR} yields
	\begin{align}
		\hat{R}_{j\;kl}^{\;i}
		&=\frac13\bigl(R_{\cdot l\cdot j}\delta^i_k-R_{\cdot k\cdot j}\delta^i_l\bigr)
		+\frac12\bigl(\tau_{\cdot j\cdot l}\delta^i_k-\tau_{\cdot j\cdot k}\delta^i_l\bigr)
		+\frac13(\iota_{l\cdot k\cdot j}-\iota_{k\cdot l\cdot j})x^i \notag\\
		&\qquad+\Bigl[\frac13(\gamma_{l\cdot k\cdot j}-\gamma_{k\cdot l\cdot j})
		+\frac1{n+1}(\chi_{k\cdot j\cdot l}-\chi_{l\cdot j\cdot k})\Bigr]y^i +\Bigl(\frac13\gamma_{l\cdot j}-\frac1{n+1}\chi_{l\cdot j}\Bigr)\delta^i_k \notag\\
		&\qquad-\Bigl(\frac13\gamma_{k\cdot j}-\frac1{n+1}\chi_{k\cdot j}\Bigr)\delta^i_l - \Bigl[\frac13(\gamma_{k\cdot l}- \gamma_{l\cdot k})
		+\frac1{n+1}(\chi_{k\cdot l}-\chi_{l\cdot k})\Bigr]\delta^i_j.
		 \notag
	\end{align}
	By Proposition \ref{corD}, $\eta_k=\frac13\gamma_k-\frac1{n+1}\chi_k$ and
	$\Delta_{kl}=\frac13(\gamma_{k\cdot l}-\gamma_{l\cdot k})
	-\frac1{n+1}(\chi_{k\cdot l}-\chi_{l\cdot k})=0$; hence the
	$y^i$- and $\delta^i_j$-terms vanish, and
	\begin{align}
		\hat{R}_{j\;kl}^{\;i}
		&=\frac13\bigl(R_{\cdot l\cdot j}\delta^i_k-R_{\cdot k\cdot j}\delta^i_l\bigr)
		+\frac12\bigl(\tau_{\cdot j\cdot l}\delta^i_k-\tau_{\cdot j\cdot k}\delta^i_l\bigr)
		+\frac13(\iota_{l\cdot k\cdot j}-\iota_{k\cdot l\cdot j})x^i
		+\eta_{l\cdot j}\delta^i_k-\eta_{k\cdot j}\delta^i_l.\label{hatR2}
	\end{align}

	Substituting \eqref{hatR2} into
	$\hat{\Omega}^i_j=\frac12\hat{R}_{j\;kl}^{\;i}dx^k\wedge dx^l$ and
	using the elementary identity
	$(R_{\cdot l\cdot j}\delta^i_k-R_{\cdot k\cdot j}\delta^i_l)dx^k\wedge dx^l
	=2R_{\cdot l\cdot j}dx^i\wedge dx^l$ (and the same identity with $\tau$
	and $\eta$ in place of $R$), we obtain
	\begin{align}\label{ST}
		\hat{\Omega}^i_j
		&=\underbrace{\Bigl(\frac13R_{\cdot l\cdot j}+\frac12\tau_{\cdot l\cdot j}+\eta_{l\cdot j}\Bigr)dx^i\wedge dx^l}_{=:\,\mathcal S^i_j}
		+\underbrace{\frac16(\iota_{l\cdot k}-\iota_{k\cdot l})_{\cdot j}x^i dx^k\wedge dx^l}_{=:\,\mathcal T^i_j}.
	\end{align}
	Since $R_{\cdot l\cdot j}=R_{\cdot j\cdot l}$,
	$\tau_{\cdot l\cdot j}=\tau_{\cdot j\cdot l}$ and
	$\eta_{l\cdot j}=\eta_{j\cdot l}$ (equivalent to $\Delta_{kl}=0$), the
	coefficient $A_{lj}=\frac13R_{\cdot l\cdot j}+\frac12\tau_{\cdot l\cdot j}+\eta_{l\cdot j}$
	is symmetric, so $\mathcal S$ satisfies the hypothesis of Lemma \ref{lem2}.

	For $\mathcal T$, use \eqref{iotac}:
	\[
	(\iota_{l\cdot k}-\iota_{k\cdot l})_{\cdot j}x^i
	=-3(x_l\delta_{kj}-x_k\delta_{lj})x^iP-3(x_ly_k-x_ky_l)x^iP_{\cdot j},
	\]
	where $P_{\cdot j}=s_{\cdot j} P_s $. Substituting into \eqref{ST} and
	using
	$(x_l\delta_{kj}-x_k\delta_{lj})dx^k\wedge dx^l=2dx^j\wedge x_ldx^l$,
	$(x_ly_k-x_ky_l)dx^k\wedge dx^l=2y_kdx^k\wedge x_ldx^l$,
	we get
	\begin{align}\label{T1}
		\mathcal T^i_j
		&=-x^iP\,dx^j\wedge\omega-x^iP_{\cdot j}\,\theta\wedge\omega
		=-x^i\bigl(P\,dx^j+P_{\cdot j}\theta\bigr)\wedge\omega,
	\end{align}
	where $\omega=x_ldx^l$, $\theta=y_ldx^l$. Thus $\mathcal T$ has the
	form required by Lemma \ref{lem2} with $\tilde H=-P$; see
	Remark~\ref{rem-sign-T}.

	By Lemma \ref{lem2}, $\sigma_{2k}(\hat\Omega)=0$ for all $k\ge1$.
	Since $\hat G$ is Berwald, $\hat\Omega$ is the curvature matrix of a
	linear connection on \(M\); the local spherical charts cover \(M\),
	so $\sigma_{2k}(\hat\Omega)\equiv0$ on \(M\).
By \eqref{4}, all
	rational Pontrjagin classes of \(M\) vanish. The argument actually
	shows that the Pontrjagin \emph{forms} $\sigma_{2k}(\hat\Omega)$
	vanish identically on \(M\), not merely their cohomology classes; see
	Remark~\ref{rem:stronger}.
\end{proof}

\begin{rem}\label{rem:stronger}
	The proof of Theorem~\ref{The 1} actually yields a stronger geometric
	statement than the cohomological vanishing asserted there. Fix a
	smooth positive density \(dv\) on \(M\) and let \(\hat G\) be the
	associated spray \eqref{1}. By Lemma~\ref{lem 1}, \(\hat G\) is
	Berwald whenever \(G\) is projectively affine; hence \(\hat G\)
	corresponds to a torsion-free linear connection \(\hat\Gamma\) on
	\(M\), whose curvature forms \(\hat\Omega^i_j\) are ordinary
	differential forms on \(M\). The argument in Section~\ref{p4.2} gives
	\[
	\sigma_{2k}(\hat\Omega)\equiv0\quad\text{on }M,\qquad k\ge1,
	\]
	as an identity of differential forms, not merely
	\([\sigma_{2k}(\hat\Omega)]=0\) in \(H^{4k}(M;\mathbb{R})\). Thus the
	curvature of \(\hat\Gamma\) is forced into an algebraic class whose
	even elementary symmetric polynomials vanish pointwise; the
	cohomological vanishing of Theorem~\ref{The 1} is an immediate
	consequence. This strengthened form does not depend on \(dv\); see
	Remark~\ref{rem-dv}.
\end{rem}

Since Berwald sprays and locally projectively flat sprays are both
special cases of projectively affine sprays, Theorem \ref{The 1}
directly yields the following two corollaries.

	\begin{cor}\label{cor-berwald-eta}
		Let \(M\) be an \(n\)-dimensional smooth manifold with \(n\ge2\). If
		\(M\) admits a locally spherically symmetric Berwald spray, then all
		rational Pontrjagin classes of \(M\) vanish.
	\end{cor}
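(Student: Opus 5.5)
The corollary is a direct specialization of Theorem~\ref{The 1}, so the plan is to check that its hypotheses hold and then invoke it. Let \(G\) be a locally spherically symmetric Berwald spray on \(M\). Being Berwald means \(B^i_{jkl}=0\) in every chart. Then \(E_{ij}=\tfrac12 B^m_{ijm}=0\), and also \(\partial E_{jk}/\partial y^l=0\). Substituting into \eqref{Dou} gives \(D^i_{jkl}=0\). So \(G\) is projectively affine, as already noted after \eqref{Dou} in Section~\ref{p2}.

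The spherical symmetry hypothesis needs no change. By definition, \(M\) has an open cover by charts in which \(G^i\) takes the form \eqref{GI}. Being Berwald is a coordinate-free condition, since \(B^i_{jkl}\) is a tensor. So \(G\) is both projectively affine and of the form \eqref{GI} in each such chart.

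Hence \(G\) is a locally spherically symmetric projectively affine spray. Theorem~\ref{The 1} then shows that all rational Pontrjagin classes of \(M\) vanish. By Remark~\ref{rem:stronger}, the Pontrjagin forms of the associated Berwald connection also vanish identically.

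As a consistency check, Theorem~\ref{Thm 2} gives the normal form \(G^i=a(r)\langle x,y\rangle y^i+(f(r)\langle x,y\rangle^2+h(r)|y|^2)x^i\) in each chart. One could also take \(\hat G\) with \(S\) computed from the Euclidean density, but this is not needed.

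I expect no step to be a real obstacle. The only point to verify is that Berwald implies Douglas-flat, which follows immediately from \eqref{Dou}.
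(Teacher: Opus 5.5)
Your proposal is correct and follows the paper's own argument. Since $B^i_{jkl}=0$ forces $D^i_{jkl}=0$ by \eqref{Dou}, the spray is a locally spherically symmetric projectively affine spray, and Theorem \ref{The 1} applies directly.
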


	\begin{cor}\label{cor-s}
		Let \(M\) be an \(n\)-dimensional smooth manifold with \(n\ge2\).
		Suppose that \(M\) admits a locally spherically symmetric spray \(G\)
		which is locally projectively flat. Then all rational Pontrjagin
		classes of \(M\) vanish.
	\end{cor}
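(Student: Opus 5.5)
The statement is Corollary \ref{cor-s}, and I would deduce it from Theorem \ref{The 1}. The only thing to check is that a locally projectively flat spray is projectively affine. Once that holds, a locally spherically symmetric, locally projectively flat spray satisfies every hypothesis of Theorem \ref{The 1}, and all rational Pontrjagin classes of \(M\) vanish. The proof of Theorem \ref{The 1} also gives, by Remark \ref{rem:stronger}, that the Pontrjagin forms \(\sigma_{2k}(\hat\Omega)\) of the associated Berwald spray vanish identically.

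First I would show that local projective flatness forces the Douglas curvature to vanish. Around each point choose a chart in which \(G^i=P(x,y)y^i\), with \(P\) positively homogeneous of degree one. The zero spray \(G_0^i=0\) has \(B^i_{jkl}=0\), so \(E_{jk}=0\) and, by \eqref{Dou}, \(D^i_{jkl}(G_0)=0\). Since \(G^i=G_0^i+Py^i\) is a projective deformation of \(G_0\), projective invariance of the Douglas curvature gives \(D^i_{jkl}(G)=0\) on that chart.

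Next I would make sure this vanishing is chart-independent. The Douglas curvature is a tensor field on \(TM\setminus\{0\}\), so vanishing in one coordinate system means vanishing in all of them. In particular \(D^i_{jkl}\) vanishes in the spherical charts \(U_\alpha\), where the coefficients take the form \eqref{GI}. So \(G\) is a locally spherically symmetric projectively affine spray.

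As a consistency check, on each spherical chart Lemma \ref{g(r)=0} then gives \(B(r,s)=f(r)s^2+h(r)\), and Proposition \ref{corD} applies. Theorem \ref{The 1} then yields the conclusion. The one point needing care, which I would address explicitly, is that the projectively flat charts and the spherically symmetric charts need not coincide. Tensoriality of \(D^i_{jkl}\) removes this issue, so there is no real obstacle.
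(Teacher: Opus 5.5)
Your proposal is correct and follows the paper's route: the paper deduces Corollary \ref{cor-s} from Theorem \ref{The 1} in one line, by noting that locally projectively flat sprays are projectively affine. Your argument fills in the step the paper takes for granted. You apply projective invariance of \(D^i_{jkl}\) to the flat spray \(G_0^i=0\), and then use tensoriality of the Douglas curvature to pass from the projectively flat charts to the spherical charts.
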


In the spherically symmetric setting, the latter corollary recovers
the result of Shen and Zhao \cite{ref1}.

	\section{\textbf{Examples}}\label{ex}		
In this section we construct spherically symmetric projectively affine
sprays that are not projectively flat, illustrating the scope of
Theorem \ref{The 1} beyond the locally projectively flat case of
\cite{ref1}; some of them are also non-Berwald, proving the existence
result Theorem \ref{thm-existence}.

	To verify that the examples below are not of scalar curvature, we use the following criterion from \cite{ref18}.
	\begin{lem}[\cite{ref18}]\label{lem3}
		Let \(G\) be a spherically symmetric spray on an open subset \(\Omega\subset \mathbb{R}^n\) with \(G^i\) expressed as in \eqref{GI}. Then \(G\) has scalar curvature if and only if $P(r,s)=0$, where $P(r,s)$ is given by \eqref{H1}.
	\end{lem}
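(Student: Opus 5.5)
The plan is to read off both directions directly from the explicit formula \eqref{R} for the two-index Riemann curvature, working on the open dense set where \(x\) and \(y\) are not parallel, i.e.\ \(r-s^2>0\), and then to extend by continuity.

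\emph{Sufficiency.} Suppose \(P=0\). Then \eqref{R} becomes
\[
R^i_{\;k}=Y|y|^2\delta^i_k-\bigl[Y y_k-X(|y|x_k-sy_k)\bigr]y^i .
\]
Set \(R:=Y|y|^2\) and \(\gamma_k:=Yy_k-X(|y|x_k-sy_k)\). Since \((|y|x_k-sy_k)y^k=s|y|^2-s|y|^2=0\), we get \(\gamma_ky^k=Y|y|^2=R\). This is exactly the scalar curvature form, so the spray has scalar curvature.

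\emph{Necessity.} Suppose \(R^i_{\;k}=R\delta^i_k-\gamma_ky^i\). Fix \(x,y\) with \(r-s^2>0\) and put \(z=x-\frac{s}{|y|}y\), so that \(\langle z,y\rangle=0\) and \((|y|x_k-sy_k)z^k=|y|(r-s^2)\).

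First, assume \(n\ge3\) and pick \(u\neq0\) orthogonal to both \(x\) and \(y\). Applying \eqref{R} to \(u\) gives \(R^i_{\;k}u^k=Y|y|^2u^i\). Comparing with the scalar form, \(Ru^i-(\gamma_ku^k)y^i\), and using that \(u\) and \(y\) are linearly independent, we obtain \(R=Y|y|^2\).

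Next, apply both forms to \(z\). From \eqref{R},
\[
R^i_{\;k}z^k=Y|y|^2z^i+P|y|^2(r-s^2)x^i+X|y|(r-s^2)y^i .
\]
Write \(x=z+\frac{s}{|y|}y\) and compare with \(Rz^i-(\gamma_kz^k)y^i\), using that \(z\) and \(y\) are independent. The \(z\)-coefficients give
\[
Y|y|^2+P|y|^2(r-s^2)=R=Y|y|^2 ,
\]
hence \(P(r-s^2)=0\) and so \(P=0\) on \(\{r>s^2\}\). Since \(P\) is smooth, continuity gives \(P=0\) on the whole domain.

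The main obstacle is dimension two. When \(n=2\) there is no vector \(u\) orthogonal to both \(x\) and \(y\), and the \(z\)-comparison alone only determines \(R\) in terms of \(Y\) and \(P\). In fact, since \(R^i_{\;k}y^k=0\) for every spray, every two-dimensional spray is of scalar curvature, so the necessity direction cannot hold verbatim when \(n=2\). I would therefore state and prove the criterion for \(n\ge3\), which is the setting of \cite{ref18}; the sufficiency direction holds for all \(n\ge2\). When the lemma is used in the examples below, I would verify ``not of scalar curvature'' for \(n\ge3\).
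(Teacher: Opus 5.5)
Your argument is correct and complete for $n\ge3$. The paper gives no proof of this lemma; it quotes it from \cite{ref18}. So your direct read-off from \eqref{R} is a self-contained substitute rather than an alternative to an argument in the text.

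\emph{Sufficiency.} Take $R=Y|y|^2$ and $\gamma_k=Yy_k-X(|y|x_k-sy_k)$. Then $\gamma_ky^k=R$, as required.

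\emph{Necessity.} Testing on a vector $u\perp x,y$ forces $R=Y|y|^2$. Testing on $z$ then forces $P(r-s^2)=0$. Density of $\{s^2<r\}$ in the range of $(r,s)$ finishes the argument.

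\emph{The case $n=2$.} Your caveat is also correct, and it points to a genuine defect in the statement as the paper writes it: no restriction on $n$ is imposed, while the paper works with $n\ge2$. In dimension two, any endomorphism of $T_xM$ annihilating $y$ has the form $R\,\delta^i_k-\gamma_ky^i$, where $R$ is its trace and $\gamma_ky^k=R$. Hence every two-dimensional spray is of scalar curvature. Yet $G^i=c|y|^2x^i$ on a domain in $\mathbb{R}^2$ (Example \ref{ex:1}) has $P=4c^2\neq0$. So the ``only if'' direction genuinely requires $n\ge3$.

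\emph{Consequences for the paper.} The paper's uses of the lemma in Section \ref{ex} are valid only when $n\ge3$. This affects the claims ``not of scalar curvature'' in Examples \ref{ex:1} and \ref{ex:2} and in Theorem \ref{thm-existence}. It also affects the inference ``not projectively flat'' in Example \ref{ex:2}, which passes through scalar curvature. Your decision to state the criterion for $n\ge3$, with sufficiency for all $n\ge2$, is the right fix.
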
	

By Lemma \ref{g(r)=0}, a spherically symmetric spray is projectively
affine if and only if \(B(r,s)=f(r)s^2+h(r)\), with \(A(r,s)\)
arbitrary. Substituting into \eqref{H1} gives
\[
P(r,s)=4rf(r)h(r)-2f(r)+4h(r)^2+4h'(r),
\]
so \(P\not\equiv0\) implies that the spray is not of scalar curvature
(Lemma \ref{lem3}).

For the Berwald case, Theorem \ref{Thm 2} characterizes Berwald sprays
by \(A(r,s)=a(r)s\) and \(B(r,s)=f(r)s^2+h(r)\); by
Corollary \ref{cor-berwald-eta}, this class automatically satisfies the
hypothesis of Theorem \ref{The 1}.

\begin{ex}\label{ex:1}
	Take \(A(r,s)=0\), \(B(r,s)=c \neq0\), so that
	\(G^i=c|y|^2x^i\). By Theorem \ref{Thm 2}, \(G\) is a Berwald spray; moreover
	\(P(r,s)=4c^2\neq0\), so \(G\) is not of scalar curvature
	(Lemma \ref{lem3}). Hence Theorem \ref{The 1} applies to \(G\).
\end{ex}

We next construct non-Berwald examples with
\(A(r,s)=a(r)s+b(r)\) and \(B(r,s)=h(r)\), so that
\[
G^i=a(r)\langle x,y\rangle y^i+b(r)|y|y^i+h(r)|y|^2x^i.
\]
By Lemma \ref{g(r)=0} every spray in this family is projectively
affine; if \(b\not\equiv0\), it is not a Berwald spray by Theorem \ref{Thm 2}.
Moreover, \eqref{H1} reduces to
\begin{align}
	P(r,s)=4h'(r)+4h(r)^2,\label{h}
\end{align}
so \(P\not\equiv0\) implies that the spray is not of scalar curvature
(Lemma \ref{lem3}).

\begin{ex}\label{ex:2}
	Take \(A(r,s)=c_1e^{-c_2r}\) and \(B(r,s)=c_2\) with
	\(c_1,c_2\neq0\). Then
	\(G^i=c_1|y|e^{-c_2r}y^i+c_2|y|^2x^i\) is projectively affine, and
	since \(A\) is not of the form \(a(r)s\), \(G\) is not a Berwald spray.
	Equation \eqref{h} gives \(P(r,s)=4c_2^2\neq0\), so \(G\) is not of
	scalar curvature; since a projectively flat spray is necessarily of
	scalar curvature, \(G\) is not projectively flat.
\end{ex}

A variant with \(A=e^{-r}\) and \(B=s^2+1\) gives
\(G^i=|y|e^{-r}y^i+(\langle x,y\rangle^2+|y|^2)x^i\), which is again
projectively affine (Lemma \ref{g(r)=0}) and non-Berwaldian
(Theorem \ref{Thm 2}), with \(P(r,s)=4r+2>0\).

\begin{proof}[Proof of Theorem \ref{thm-existence}]
	Example \ref{ex:2} and the variant above are two spherically
	symmetric projectively affine sprays that are non-Berwald and not of
	scalar curvature. Neither is projectively flat, and by the proof of
	Theorem \ref{The 1} the Pontrjagin forms of their associated sprays
	vanish identically.
\end{proof}

For a spray \(G\), write \(\breve{\Omega}=(\breve{\Omega}^i_j)\) for the
curvature \(2\)-forms of its Berwald connection \eqref{Ber}, to
distinguish it from the curvature \(\hat{\Omega}\) of the associated
spray used in Section \ref{p4}. In dimension
four, Newton's identity (Lemma \ref{lem-newton}) gives
\begin{equation} \label{ex_sigma_2}
\sigma_2(\breve{\Omega})
=\frac{1}{2}\Bigl[\bigl(\operatorname{tr}\breve{\Omega}\bigr)^2
-\operatorname{tr}\bigl(\breve{\Omega}\wedge\breve{\Omega}\bigr)\Bigr],
\end{equation}
so to show \(\sigma_2(\breve\Omega)\not\equiv0\) it suffices to exhibit a
nonzero local \(4\)-form component.

We conclude this section by showing that the projectively affine
hypothesis in Theorem \ref{The 1} cannot be removed. On a
four-dimensional Euclidean domain \(U\subset\mathbb{R}^4\) we consider a
spherically symmetric spray that is not projectively affine, and whose
second Berwald Chern--Weil form does not vanish identically. This example
is local and is not intended as a converse to Theorem \ref{The 1}; it
illustrates the sharpness of the projectively affine condition.

\begin{ex}\label{ex:4}
	Consider \(G^i=|y|y^i+|y|\langle x,y\rangle x^i\), so that
	\(A(r,s)=1\), \(B(r,s)=s\). Since \(B\) is not of the form
	\(f(r)s^2+h(r)\), \(G\) is not projectively affine by
	Lemma \ref{g(r)=0}.

	We show directly that the second Berwald Chern--Weil form of \(G\)
	does not vanish. The Berwald connection and curvature forms are
	given by $\omega^i_j=\Gamma^i_{jk}\,dx^k$ and
	\begin{equation}\label{breveOmega}
		\breve{\Omega}^i_j = d\omega^i_j - \omega^m_j \wedge \omega^i_m =
		\frac{\partial \Gamma^i_{jk}}{\partial x^l}dx^l \wedge dx^k
		+ \frac{\partial \Gamma^i_{jk}}{\partial y^l}dy^l \wedge dx^k
		- \Gamma^m_{jk}\Gamma^i_{ml}\,dx^k \wedge dx^l.
	\end{equation}
	We work in dimension four and compute the coefficient of the
	\(4\)-form \(dx^1\wedge dx^2\wedge dy^1\wedge dy^2\). Since
	\(\breve{\Omega}\) contains no \(dy\wedge dy\) terms, the only
	components that can contribute to this coefficient in
	\(\breve{\Omega}\wedge\breve{\Omega}\) and
	\(\operatorname{tr}\breve{\Omega}\wedge\operatorname{tr}\breve{\Omega}\)
	are the \(dy^l\wedge dx^k\) terms with $k, l \in \{1, 2\}$.

	Using
	\( \partial|y| / \partial y^j= y_j /|y| \) and
	\( \partial\langle x,y\rangle / \partial y^j=x_j\), a direct
	computation gives
	\begin{align*}
		\Gamma^i_{jk}
		&=\frac{\delta_{jk}y^i+y_j\delta^i_k+y_k\delta^i_j}{|y|}
		-\frac{y_jy_ky^i}{|y|^3}
		+x^i\Bigl[\langle x,y\rangle\mathcal H_{jk}
		+\frac{y_jx_k+y_kx_j}{|y|}\Bigr],
	\end{align*}
	where $\mathcal H_{jk}=\delta_{jk}/|y|-y_jy_k/|y|^3$, and
	\begin{align}
		\Gamma^i_{jk\cdot l}:= \frac{\partial\Gamma^i_{jk}}{\partial y^l}
		&=\delta^i_l\mathcal H_{jk}+y^i\frac{\partial\mathcal H_{jk}}{\partial y^l}
		+\frac{\delta_{jl}\delta^i_k+\delta_{kl}\delta^i_j}{|y|}
		-\frac{(y_j\delta^i_k+y_k\delta^i_j)y_l}{|y|^3} \notag \\
		&\quad+x^i\Bigl[x_l\mathcal H_{jk}
		+\langle x,y\rangle\frac{\partial\mathcal H_{jk}}{\partial y^l}
		+\frac{\delta_{jl}x_k+\delta_{kl}x_j}{|y|}
		-\frac{(y_jx_k+y_kx_j)y_l}{|y|^3}\Bigr], \label{ex_Gamma_dot}
	\end{align}
	with $\partial\mathcal H_{jk}/\partial y^l
	=-(\delta_{jk}y_l+\delta_{jl}y_k+\delta_{kl}y_j)/|y|^3
	+3y_jy_ky_l/|y|^5$.

	Set \(x_0=(1,0,0,0)\) and
	\(y_0=(0,\frac{\sqrt{2}}{2},\frac{\sqrt{2}}{2},0)\); then
	\(|y_0|=1\) and \(\langle x_0,y_0\rangle=0\). At this point,
	\eqref{ex_Gamma_dot} reduces to
	\begin{align}\label{Gamma_ijkl_0}
		\Gamma^i_{jk\cdot l}& =
		\delta_{jk}\delta^i_l + \delta_{jl}\delta^i_k+\delta_{kl}\delta^i_j - \delta^i_l y_j y_k
		- \delta^i_k y_j y_l - \delta^i_j y_k y_l
		 + y^i \Bigl[ 3y_j y_k y_l- \delta_{jk} y_l-\delta_{jl}y_k-\delta_{kl} y_j \Bigr] \notag\\
		&\quad + x^i\Bigl[ \delta_{jk}x_l + \delta_{jl}x_k+ \delta_{kl}x_j  -y_j y_k x_l
		- y_j y_l x_k - y_k y_l x_j  \Bigr].
	\end{align}
	A direct substitution into \eqref{Gamma_ijkl_0} gives the
	non-vanishing components
	\[
		\begin{aligned}
			&\Gamma^1_{11\cdot 1} = 6, \quad \Gamma^1_{12\cdot 2} = 1, \quad
			\Gamma^1_{21\cdot 2} = 1, \quad \Gamma^1_{22\cdot 1} = 1, \quad
			\Gamma^1_{31\cdot 2} = -1, \quad \Gamma^1_{32\cdot 1} = -1, \\
			&\Gamma^2_{11\cdot 2} = \tfrac{1}{2}, \quad \Gamma^2_{12\cdot 1} = \tfrac{1}{2}, \quad
			\Gamma^2_{21\cdot 1} = \tfrac{1}{2}, \quad \Gamma^2_{22\cdot 2} = \tfrac{3}{4}, \quad
			\Gamma^2_{31\cdot 1} = -\tfrac{1}{2}, \quad \Gamma^2_{32\cdot 2} = -\tfrac{3}{4}, \\
			&\Gamma^3_{11\cdot 2} = -\tfrac{1}{2}, \quad \Gamma^3_{12\cdot 1} = -\tfrac{1}{2}, \quad
			\Gamma^3_{21\cdot 1} = -\tfrac{1}{2}, \quad \Gamma^3_{22\cdot 2} = -\tfrac{3}{4}, \quad
			\Gamma^3_{31\cdot 1} = \tfrac{1}{2}, \quad \Gamma^3_{32\cdot 2} = \tfrac{3}{4}, \\
			&\Gamma^4_{41\cdot 1} = 1, \quad \Gamma^4_{42\cdot 2} = \tfrac{1}{2},
		\end{aligned}
	\]
	and all other components with $k,l \in \{1, 2\}$ vanish at \((x_0,y_0)\).

	Here and below, \(\dot{\Omega}^i_j\) denotes terms that do not
	contribute to the coefficient of
	\(dx^1\wedge dx^2\wedge dy^1\wedge dy^2\). Substituting the above
	values into \eqref{breveOmega} yields
	\begin{align*}
		\breve{\Omega}^1_1 &= 6\, dy^1 \wedge dx^1 + dy^2 \wedge dx^2 + \dot{\Omega}^1_1, \quad
		\breve{\Omega}^1_2 = dy^1 \wedge dx^2 + dy^2 \wedge dx^1 + \dot{\Omega}^1_2,  \\
\breve{\Omega}^1_3 &= -dy^1 \wedge dx^2 - dy^2 \wedge dx^1 + \dot{\Omega}^1_3,
\end{align*}
\begin{align*}
		\breve{\Omega}^2_1 &= \tfrac{1}{2}\, dy^1 \wedge dx^2 + \tfrac{1}{2}\, dy^2 \wedge dx^1 + \dot{\Omega}^2_1, \quad
		\breve{\Omega}^2_2 = \tfrac{1}{2}\, dy^1 \wedge dx^1 + \tfrac{3}{4}\,dy^2 \wedge dx^2 + \dot{\Omega}^2_2, \\
\breve{\Omega}^2_3 &= -\tfrac{1}{2}\, dy^1 \wedge dx^1 - \tfrac{3}{4}\,dy^2 \wedge dx^2 + \dot{\Omega}^2_3,  \\
		\breve{\Omega}^3_1 &= -\tfrac{1}{2}\, dy^1 \wedge dx^2 - \tfrac{1}{2}\, dy^2 \wedge dx^1 + \dot{\Omega}^3_1, \quad
		\breve{\Omega}^3_2 = -\tfrac{1}{2}\, dy^1 \wedge dx^1 - \tfrac{3}{4}\,dy^2 \wedge dx^2 + \dot{\Omega}^3_2, \\
		\breve{\Omega}^3_3 &= \tfrac{1}{2}\, dy^1 \wedge dx^1 + \tfrac{3}{4}\,dy^2 \wedge dx^2 + \dot{\Omega}^3_3, \quad
		\breve{\Omega}^4_4 = dy^1 \wedge dx^1 +\tfrac{1}{2}\, dy^2 \wedge dx^2 + \dot{\Omega}^4_4,
	\end{align*}
	with $\breve{\Omega}^1_4, \breve{\Omega}^2_4, \breve{\Omega}^3_4,
	\breve{\Omega}^4_1, \breve{\Omega}^4_2, \breve{\Omega}^4_3$ entirely
	absorbed into their respective $\dot{\Omega}^i_j$.

	A direct computation of the diagonal components of
	\(\breve{\Omega}\wedge\breve{\Omega}\) yields
	\begin{align*}
		\bigl( \breve{\Omega}\wedge\breve{\Omega} \bigr)^1_1 &= -10 \, dx^1\wedge dx^2\wedge dy^1\wedge dy^2 + \dot{W}^1_1, \quad
		\bigl( \breve{\Omega}\wedge\breve{\Omega} \bigr)^2_2 = -\frac{1}{2} \, dx^1\wedge dx^2\wedge dy^1\wedge dy^2 + \dot{W}^2_2, \\
		\bigl( \breve{\Omega}\wedge\breve{\Omega} \bigr)^3_3 &= -\frac{1}{2} \, dx^1\wedge dx^2\wedge dy^1\wedge dy^2 + \dot{W}^3_3, \quad
		\bigl( \breve{\Omega}\wedge\breve{\Omega} \bigr)^4_4 = -\, dx^1\wedge dx^2\wedge dy^1\wedge dy^2 + \dot{W}^4_4,
	\end{align*}
	where \(\dot{W}^i_i\) collects the remaining cross terms. Thus,
	\[
		\operatorname{tr} \bigl( \breve{\Omega}\wedge\breve{\Omega} \bigr)
		= -12 \, dx^1\wedge dx^2\wedge dy^1\wedge dy^2 + \dot{W}.
	\]
	Taking the trace of \(\breve{\Omega}\) and retaining the explicit
	components together with the omitted terms, we get
	\[
		\operatorname{tr}\breve{\Omega}
		= 8\,dy^1\wedge dx^1+ 3 \,dy^2\wedge dx^2 + \dot{T},
	\]
	where \(\dot{T}\) collects the remaining components. Therefore,
	\begin{align*}
		\operatorname{tr}\breve{\Omega}\wedge\operatorname{tr}\breve{\Omega}
		&= -48\,dx^1\wedge dx^2\wedge dy^1\wedge dy^2
		+ 2\bigl(8\,dy^1\wedge dx^1+3\,dy^2\wedge dx^2\bigr)\wedge\dot{T}
		+ \dot{T}\wedge\dot{T}.
	\end{align*}

	By Newton's identity, the coefficient of
	\(dx^1\wedge dx^2\wedge dy^1\wedge dy^2\) in
	\(\sigma_2(\breve{\Omega})\) equals
	\[
		\frac{1}{2} \bigl[-48-(-12)\bigr] = -18.
	\]
	Hence \(\sigma_2(\breve{\Omega})\not\equiv0\). This shows that
	spherical symmetry alone does not force the second Berwald
	Chern--Weil form to vanish identically.
\end{ex}

Since \(U\) is contractible, every closed form on \(U\) is exact, so all
Pontrjagin classes of \(U\) vanish trivially. The point of Example
\ref{ex:4} is different: it shows that the Berwald Chern--Weil form
\(\sigma_2(\breve\Omega)\) is not identically zero as a differential
form.
Since the slit tangent bundle over a contractible domain is contractible,
this nonvanishing form is exact there; hence there is no contradiction with the vanishing of the cohomology class.
The example therefore shows that
spherical symmetry alone does not force \(\sigma_2(\breve\Omega)\) to
vanish, and hence that the projectively affine condition in Theorem
\ref{The 1} is essential for the vanishing of the \emph{form}.

	\phantomsection

\end{document}